\numberwithin{equation}{section}
\newtheorem{theorem}{Theorem}[section]
\newtheorem{lemma}[theorem]{Lemma}
\newtheorem{prop}[theorem]{Proposition}
\theoremstyle{definition}
\newtheorem{remark}[theorem]{Remark}
\theoremstyle{definition}
\theoremstyle{definition}
\def\dashint{\operatorname%
{\,\,\text{\bf-}\kern-.98em\DOTSI\intop\ilimits@\!\!}}
\def\\det{\text{det}}
\def\.5{\frac{1}{2}}
\newcommand{\RN}[1]{%
  \textup{\uppercase\expandafter{\romannumeral#1}}%
}
\renewcommand{\epsilon}{\varepsilon}
\newcounter{marnote}
\begin{document}


\title[Blowup Analysis for the Perfect Conductivity Problem ]{Blowup Analysis for the Perfect Conductivity Problem with convex but not strictly convex inclusions}

\author[H.J. Ju]{Hongjie Ju}
\address[H.J. Ju]{School of  Sciences, Beijing University of Posts  and Telecommunications,
Beijing 100876, China}
\email{hjju@bupt.edu.cn}
\thanks{H.J. Ju   was partially supported by NSFC (11301034) (11471050).}

\author[H.G. Li]{Haigang Li}
\address[H.G. Li]{School of Mathematical Sciences, Beijing Normal University, Laboratory of Mathematics and Complex Systems, Ministry of Education, Beijing 100875, China. }
\email{hgli@bnu.edu.cn}
\thanks{H.G. Li was partially supported by  NSFC (11571042) (11371060) (11631002), Fok Ying Tung Education Foundation (151003). }

\author[L.J. Xu]{Longjuan Xu}
\address[L.J. Xu]{School of Mathematical Sciences, Beijing Normal University, Laboratory of Mathematics and Complex Systems, Ministry of Education, Beijing 100875, China.}
\email{ljxu@mail.bnu.edu.cn. Corresponding author.}

\maketitle

\begin{abstract}
In the perfect conductivity problem, it is interesting to study whether the electric field can become arbitrarily large or not, in a narrow region between two adjacent perfectly conducting inclusions. In this paper, we show that the relative convexity of two adjacent inclusions plays a key role in the blowup analysis of the electric field and find some new phenomena. By energy method, we prove the boundedness of the gradient of the solution if two adjacent inclusions fail to be locally relatively strictly convex, namely, if the top and bottom boundaries of the narrow region are partially ``flat". The boundary estimates when an inclusion with partially ``flat" boundary is close to the ``flat" matrix boundary and estimates for the general elliptic equation of divergence form are also established in all dimensions.
\end{abstract}

\section{Introduction and main results}

In this paper, we mainly investigate the significant role of the relative convexity of two adjacent inclusions in the blowup analysis of the electric field. The problem arises from the study of high-contrast fiber-reinforced composite materials. It is well known that high concentration phenomenon of extreme electric field or mechanical loads in the extreme loads will be amplified by the composite microstructure, for example, the narrow region between two adjacent inclusions or the thin gap between the inclusions and the matrix boundary. However, when the narrow region has certain partially ``flat" top and bottom boundaries (see Figure \ref{fig1}), we prove that the electric field, represented by the gradient of the solutions $|\nabla u|$, is bounded by some positive constant, which is independent of the distance between the inclusions, rather than blow up as one might suppose. Since the antiplane shear model of composite material is consistent with the two-dimensional conductivity model, our results here have also a valuable meaning for the damage analysis of composite materials.

For strictly convex inclusions, especially for circular inclusions, there have been many important works on the gradient estimates. For two adjacent disks with $\varepsilon$ apart, Keller \cite{k1} was the first to use analysis to estimate the effective properties of particle reinforced composites. In \cite{basl}, Babu\v{s}ka, Andersson, Smith, and Levin numerically analyzed the initiation and growth of damage in composite materials, in which the inclusions are frequently spaced very closely and even touching. Bonnetier and Vogelius \cite{bv} and Li and Vogelius \cite{lv} proved the uniform boundedness of $|\nabla{u}|$ regardless of $\varepsilon$ provided that the conductivities stay away from $0$ and $\infty$.
Li and Nirenberg \cite{ln} extended the results in \cite{lv} to general divergence form second order elliptic systems including systems of elasticity. On the other hand, in order to investigate the high-contrast conductivity problem, Ammari, Kang, and Lim \cite{akl} were the first to study the case of the close-to-touching regime of disks whose conductivities degenerate to $\infty$ or $0$, a lower bound on $|\nabla u|$ was constructed there showing blowup of order $\varepsilon^{-1/2}$ in dimension two. Subsequently, it has been proved by many mathematicians that for the two close-to-touching inclusions case the generic blowup rate of $|\nabla{u}|$ is $\varepsilon^{-1/2}$ in two dimensions, $|\varepsilon\log\varepsilon|^{-1}$ in three dimensions, and $\varepsilon^{-1}$ in dimensions greater than four. See Ammari, Kang, Lee, Lee and Lim \cite{aklll}, Bao, Li and Yin \cite{bly1,bly2}, as well as Lim and Yun \cite{ly,ly2}, Yun \cite{y1,y2,y3}, Lim and Yu \cite{lyu}. The corresponding boundary estimates when one inclusion close to the boundary was established in \cite{aklll} mentioned before for disk inclusion case and in \cite{LX} by Li and Xu for the general convex inclusion case in all dimensions. Further, more detailed, characterizations of the singular behavior of gradient of $u$ have been obtained by Ammari, Ciraolo, Kang, Lee and Yun \cite{ackly}, Ammari, Kang, Lee, Lim and Zribi \cite{akllz}, Bonnetier and Triki \cite{bt1,bt2}, Gorb and Novikov \cite{gn} and Kang, Lim and Yun \cite{kly,kly2}. We draw the attention of readers that recently, Bao, Li and Li \cite{bll, bll2} obtained the pointwise upper bound of the gradient of solution to the Lam\'{e} system with partial infinite coefficients, where an iteration technique for energy estimate overcomes the difficulty in the study of elliptic systems caused by the lack of the maximum principle, which is an essential tool to deal with the scalar case. The boundary estimates was studied by Bao, Ju and Li \cite{bjl}. Kang and Yu \cite{ky} by using the layer potential techniques and the singular functions obtained a lower bound of the gradient of solution in dimension two and shows that the blowup rate in \cite{bll} is optimal. For more related work on elliptic equations and systems from composites, see \cite{adkl,bc,dongli,dongzhang,kly0,llby,m} and the references therein.

As we have mentioned before, in all the known work above, the strict convexity of the inclusions (or at least the strictly relative convexity of two adjacent inclusions) are assumed. Interestingly, when the inclusions are only convex but not strictly convex (see Figure \ref{fig1}), we find in this paper that blowup will not occur any more for the perfect conductivity problem. We prove that $|\nabla u|$ is uniformly bounded with respect to $\varepsilon$ whenever the area of the flat boundaries is bigger than zero. To the best of our knowledge, this is a new phenomenon in the blowup analysis of perfect conductivity problem. The corresponding result for linear elasticity case will be presented in a forthcoming paper.

\begin{figure}[t]
\begin{minipage}[c]{0.9\linewidth}
\centering
\includegraphics[width=1.5in]{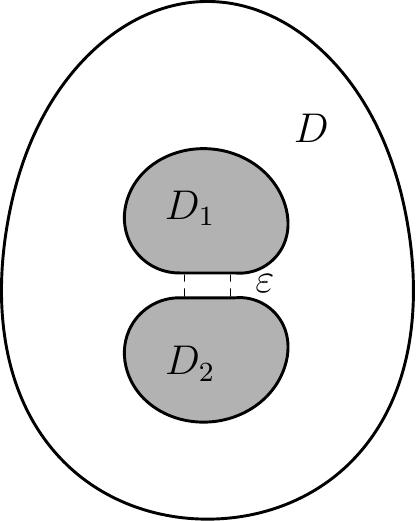}
\caption{\small Two adjacent inclusions with partially ``flat" boundaries.}
\label{fig1}
\end{minipage}
\end{figure}

To describe the problem and results, we first fix our domain and notations. Let $D$ be a bounded open set in $\mathbb R^{n}$ $(n\geq2)$. $D_{1}^{0}$ and $D_{2}^{0}$ be a pair of (touching) subdomains of $D$ with $C^{2,\alpha}$ $(0<\alpha<1)$ boundaries and far away from $\partial D$. They are convex but not strictly convex, having a part of common boundary $\Sigma'$, such that
$$D_{1}^{0}\subset\{(x',x_{n})\in\mathbb R^{n}| x_{n}>0\},\quad D_{2}^{0}\subset\{(x',x_{n})\in\mathbb R^{n}| x_{n}<0\},$$
and
$$\partial D_{1}^{0}\cap\partial D_{2}^{0}=\Sigma'\subset\mathbb R^{n-1}.$$
We assume that $\Sigma'$ is a bounded convex domain in $\mathbb R^{n-1}$, which  can contain an $(n-1)$-dimensional ball. We set the center of the mass of $\Sigma'$ to be the origin. We also assume that  the $C^{2,\alpha}$ norms of $\partial{D}_{1}^{0}$, $\partial{D}_{2}^{0}$ and  $\partial{D}$ are bounded by some positive constant. By translating $D_{1}^{0}$ by a positive number $\varepsilon$ along $x_{n}$-axis, while $D_{2}^{0}$ is fixed, we obtain $D_{1}^{\varepsilon}$, that is,
$$D_{1}^{\varepsilon}:=D_{1}^{0}+(0',\varepsilon).$$
When there is no possibility of confusion, we drop superscripts and denote
$$D_{1}:=D_{1}^{\varepsilon},\quad D_{2}:=D_{2}^{0},\quad \mbox{and}~ \Sigma:=\Sigma'\times(0,\varepsilon).$$
See Figure \ref{fig1}. We use superscripts prime to denote the $(n-1)$-dimensional domains and variables, such as $\Sigma'$ and $x'$.

Further, we may assume that there exists a constant $R_{1}$, independent of $\varepsilon$, such that $\Sigma'\subset B'_{R_{1}}$ and the top and bottom boundaries of the narrow region between $\partial{D}_{1}$ and $\partial{D}_{2}$ can be represented as follows. The corresponding partial boundaries of $\partial D_{1}$ and $\partial D_{2}$ are, respectively,
\begin{align}\label{h1h2'}x_{n}=\varepsilon+h_{1}(x')\quad\mbox{and}\quad x_{n}=h_{2}(x'),\quad\mbox{for}~x'\in B'_{R_{1}},\end{align}
with
\begin{equation}\label{h1h2}
h_{1}(x')=h_{2}(x')\equiv0,\quad\mbox{for}~~x'\in\Sigma'.
\end{equation}
Moreover, in view of the assumptions of $\partial D_1$ and $\partial D_2$,  $h_{1}$ and $h_{2}$ satisfy
\begin{equation}\label{h1-h2}
\varepsilon+h_{1}(x')>h_{2}(x'),\quad\mbox{for}~~x'\in B'_{R_{1}}\setminus\overline{\Sigma'},
\end{equation}
\begin{equation}\label{h1---h2}
\nabla_{x'}h_{1}(x')=\nabla_{x'}h_{2}(x')=0,\quad\mbox{for}~~x'\in\partial\Sigma',
\end{equation}
\begin{equation}\label{h1----h2}
\nabla^{2}_{x'}(h_{1}-h_{2})(x')\geq\kappa_{0}I_{n-1},\quad\mbox{for}~~x'\in B'_{R_{1}}\setminus\overline{\Sigma'},
\end{equation}
and
\begin{equation}\label{h1h3}
\|h_{1}\|_{C^{2,\alpha}(B'_{R_{1}})}+\|h_{2}\|_{C^{2,\alpha}(B'_{R_{1}})}\leq{\kappa_1},
\end{equation}
where $\kappa_{0},\ \kappa_1$ are positive constant, $I_{n-1}$ is the $(n-1)\times(n-1)$ identity matrix. Denote $\Omega:=D\setminus\overline{D_1\cup D_2}$.

Suppose that the conductivities of the inclusions $D_{1}$ and $D_{2}$ degenerate to $\infty$; in other words, the inclusions are perfect conductors. Consider the following perfect conductivity problem
\begin{equation}\label{equinfty1}
\begin{cases}
\Delta{u}=0,& \mbox{in}~\Omega,\\
u=C_{i},&\mbox{on}~\overline{D}_{i},~i=1,2,\\
\int_{\partial{D}_{i}}\frac{\partial{u}}{\partial\nu}\big|_{+}=0,& i=1,2,\\
u=\varphi,&\mbox{on}~\partial{D},
\end{cases}
\end{equation}
where $\varphi\in{C}^{2}(\partial D)$, $C_{i}$ are some constants to be determined later, and
$$\frac{\partial{u}}{\partial\nu}\bigg|_{+}:=\lim_{\tau\rightarrow0}\frac{u(x+\nu\tau)-u(x)}{\tau}.$$
Here and throughout this paper $\nu$ is the outward unit normal to
the domain and the subscript $\pm$ indicates the limit from outside
and inside the domain, respectively.

The existence, uniqueness and regularity of solutions to problem
\eqref{equinfty1} can be referred to the Appendix in \cite{bly1}, with a minor modification. Throughout the paper, unless otherwise stated, we use $C$ to denote some positive constant, whose values may vary from line to line, depending only on $n, \kappa_0, \kappa_1, R_{1}$, and an upper bound of the $C^{2,\alpha}$ norms of $\partial D_{1}, \partial D_{2}$ and $\partial D$, but not on $\varepsilon$. We call a constant having such dependence a {\it universal constant}.

Denote
\begin{equation*}
\Omega_r:=\left\{(x',x_{n})\in \mathbb{R}^{n}~\big|~h_{2}(x')<x_{n}<\varepsilon+h_{1}(x'),~|x'|<r\right\}, \quad\mathrm{for}\ 0<r\leq\,R_{1},
\end{equation*}
and
\begin{equation}\label{def_rhon}
\rho_{n}(\varepsilon)=
\begin{cases}
\sqrt{\varepsilon},& n=2,\\
\varepsilon|\mathrm{ln}\varepsilon|, & n=3,\\
\varepsilon, & n\geq4.
\end{cases}
\end{equation}
Under the assumptions as above, we have the following gradient estimates in all dimensions.

\begin{theorem}\label{thm1}
Suppose that $D_{1}, D_{2}\subset{D}\subset\mathbb{R}^{n}$ ($n\geq2$) be defined as above and  (\ref{h1h2'})--(\ref{h1h3}) hold. Let
$u\in{H}^{1}(D)\cap{C}^{1}(\overline{\Omega})$
be the solution to \eqref{equinfty1}. Then  we have
\begin{align}\label{upperbound1'}
|\nabla{u}(x',x_{n})|\leq \frac{C\varepsilon}{|\Sigma'|+\rho_{n}(\varepsilon)}\frac{1}{\varepsilon+dist^{2}(x',\Sigma')}||\varphi||_{C^{2}(\partial D)},\qquad\mbox{for}~x\in\Omega_{R_{1}}.
\end{align}
and
\begin{align}\label{upperbound10}
||\nabla{u}||_{L^{\infty}(\Omega\setminus\Omega_{R_{1}})}\leq C||\varphi||_{C^{2}(\partial D)}.
\end{align}
\end{theorem}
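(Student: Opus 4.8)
The plan is to follow the by-now standard decomposition-and-energy strategy adapted to perfectly conducting inclusions, the new point being that the ``flat'' part $\Sigma'$ of the boundary changes the geometry of the narrow region $\Omega_{R_1}$ and hence the size of the relevant energy integrals. First I would decompose $u$ as $u = C_1 v_1 + C_2 v_2 + v_0$, where $v_i\in H^1(\Omega)$ ($i=1,2$) is harmonic in $\Omega$, equal to $1$ on $\partial D_i$, $0$ on $\partial D_j$ ($j\ne i$) and on $\partial D$, and $v_0$ is harmonic in $\Omega$, equal to $0$ on $\partial D_1\cup\partial D_2$ and equal to $\varphi$ on $\partial D$. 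The constants $C_1,C_2$ are then fixed by the two flux conditions $\int_{\partial D_i}\partial_\nu u|_+ = 0$, which yields a $2\times 2$ linear system whose coefficients are the energies $\int_\Omega \nabla v_i\cdot\nabla v_j$. One also records the elementary bound $|C_1-C_2|\le C\|\varphi\|_{C^2(\partial D)}$ and $|C_i|\le C\|\varphi\|_{C^2(\partial D)}$, which comes from the maximum principle together with the non-degeneracy of that linear system; here the scalar nature of the problem (availability of the maximum principle) is what makes life easier than in the elasticity case referenced in the introduction.

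The core of the argument is the gradient estimate for $v_1$ (and symmetrically $v_2$), together with the observation $\nabla u = (C_1-C_2)\nabla v_1 + \nabla(C_2 v_1 + C_2 v_2 + v_0)$ and $v_1 + v_2 \equiv 1$ away from $\partial D$, so that after subtracting a constant one reduces to controlling $(C_1-C_2)\nabla v_1$ plus the harmless term $\nabla v_0$, the latter being bounded by $C\|\varphi\|_{C^2}$ by standard interior/boundary elliptic estimates since $v_0$ does not feel the thin gap. For $v_1$ I would build an explicit approximating function $\bar v_1$ in $\Omega_{R_1}$: linear in $x_n$ between the two boundary graphs, i.e. $\bar v_1(x) = \frac{x_n - h_2(x')}{\varepsilon + h_1(x') - h_2(x')}$, cut off smoothly to match the exterior behaviour. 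Writing $w = v_1 - \bar v_1$, one has $\Delta w = -\Delta\bar v_1$ in $\Omega$, $w = 0$ on $\partial D_1\cup\partial D_2\cup\partial D$, and the key energy estimate
\[
\int_\Omega |\nabla w|^2 \le C\int_\Omega |\nabla \bar v_1|^2 + (\text{lower order}),
\]
from which, via the De Giorgi–Nash–Moser / Caccioppoli iteration on dyadic cylinders in the $x'$-variable adapted to the degenerating height $\delta(x') := \varepsilon + h_1(x') - h_2(x')$, one upgrades to the pointwise bound $|\nabla v_1(x)| \le C/\delta(x')$, and $\|\nabla v_1\|_{L^\infty(\Omega\setminus\Omega_{R_1})}\le C$. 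Combined with the $|C_1-C_2|$ bound this already gives the right-hand side shape once we know $\delta(x') \asymp \varepsilon + \mathrm{dist}^2(x',\Sigma')$, which follows from \eqref{h1h2}, \eqref{h1---h2}, and the uniform convexity \eqref{h1----h2} of $h_1-h_2$ outside $\overline{\Sigma'}$ (Taylor expansion from $\partial\Sigma'$).

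The main obstacle — and the place where the ``flat'' hypothesis genuinely enters — is getting the \emph{sharp} denominator $|\Sigma'| + \rho_n(\varepsilon)$ rather than just $\rho_n(\varepsilon)$. This requires computing $\int_\Omega|\nabla\bar v_1|^2$, which up to lower-order terms equals $\int_{|x'|<R_1} \delta(x')^{-1}\,dx'$ plus a contribution from the tangential derivative; over the flat core $\Sigma'$ one has $\delta(x')\equiv\varepsilon$, contributing $|\Sigma'|/\varepsilon$, while over the annular region $B'_{R_1}\setminus\overline{\Sigma'}$ the integral $\int \delta^{-1}\,dx'$ is, by the convexity of $h_1-h_2$ and a change of variables measuring distance to $\partial\Sigma'$, comparable to $\rho_n(\varepsilon)/\varepsilon$ (this is precisely where the dimension-dependent $\sqrt\varepsilon$, $\varepsilon|\ln\varepsilon|$, $\varepsilon$ dichotomy is produced, exactly as in \cite{bly1,bly2,LX}). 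Thus $\int_\Omega|\nabla\bar v_1|^2 \asymp (|\Sigma'|+\rho_n(\varepsilon))/\varepsilon$. Feeding this back through the flux system — the diagonal entry $\int_\Omega|\nabla v_1|^2 \asymp \int_\Omega|\nabla \bar v_1|^2$ appears in the denominator determining $C_1 - C_2$ in terms of $\int_{\partial D} \varphi\,\partial_\nu v_0$-type quantities — converts the factor into the stated $\varepsilon/(|\Sigma'|+\rho_n(\varepsilon))$. The remaining bookkeeping is the careful matching of $\bar v_1$ to $v_1$ near $\partial\Omega_{R_1}$ and verifying that all the ``lower order'' terms above are genuinely dominated; these are routine given the $C^{2,\alpha}$ control \eqref{h1h3} on $h_1,h_2$.
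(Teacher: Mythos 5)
Your proposal is essentially the paper's own proof: the same decomposition $u=C_1v_1+C_2v_2+v_3$, the same linear-in-$x_n$ auxiliary function upgraded by a Caccioppoli-type iteration to $|\nabla v_1(x)|\asymp\left(\varepsilon+\mathrm{dist}^2(x',\Sigma')\right)^{-1}$, the same splitting of $\int_\Omega|\nabla v_1|^2$ into the flat core (giving $|\Sigma'|/\varepsilon$) and the strictly convex annulus (giving $\rho_n(\varepsilon)/\varepsilon$), and the same use of the flux system to extract $|C_1-C_2|\le C\varepsilon\left(|\Sigma'|+\rho_n(\varepsilon)\right)^{-1}\|\varphi\|_{C^2(\partial D)}$. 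The only slip is the assertion that $v_1+v_2\equiv 1$ away from $\partial D$, which is false ($v_1+v_2$ is a nonconstant harmonic function equal to $1$ on $\partial D_1\cup\partial D_2$ and $0$ on $\partial D$); what is true, and what the paper uses, is that $v_1+v_2-1$ vanishes on both inclusion boundaries, so the narrow-region derivative estimate of \cite{llby} yields $|\nabla(v_1+v_2)|\le C$, which together with $|C_2|\le C$ makes that term harmless.
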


\begin{remark}\label{rmk1}
From \eqref{upperbound1'}, we can see that if $|\Sigma'|>0$, where $|\Sigma'|$ denotes the area of $\Sigma'$, then for sufficiently small $\varepsilon>0$ (such that $\rho_{n}(\varepsilon)<|\Sigma'|$), $|\nabla u|$ is also bounded in $\Omega_{R_{1}}$. This implies that there is no blowup occurring whenever $|\Sigma'|>0$. When $|\Sigma'|=0$ (that is, $\Sigma'=\{0'\}$), \eqref{upperbound1'} becomes
$$|\nabla{u}(x',x_{n})|\leq \frac{C\varepsilon}{\rho_{n}(\varepsilon)}\frac{1}{\varepsilon+|x'|^{2}}||\varphi||_{C^{2}(\partial D)},\qquad\mbox{for}~x\in\Omega_{R_{1}}.
$$
This pointwise upper bound estimates shows that
$$|\nabla{u}(0',x_{n})|\leq \frac{C}{\rho_{n}(\varepsilon)}||\varphi||_{C^{2}(\partial D)},\quad 0<x_{n}<\varepsilon.
$$
From the proof of Theorem \ref{thm1}, we also can obtain the lower bound of $|\nabla{u}(0',x_{n})|$,
$$|\nabla{u}(0',x_{n})|\geq \frac{1}{C\rho_{n}(\varepsilon)},
$$
whenever some linear functional of $\varphi$ is not equal to zero, see Remark \ref{rem2.5} after the proof of Theorem \ref{thm1}. This shows that the blowup rate is $\frac{1}{\rho_{n}(\varepsilon)}$, which is consistent with the known results, such as \cite{bly1,akl,aklll,ly}.
\end{remark}

\begin{remark}
We draw attention of readers that although the strictly convexity of $D_{1}$ and $D_{2}$ is not necessary in some known work, such as \cite{bly1,kly}, where
$$\frac{1}{C}|x'|^{m}\leq (h_{1}-h_{2})(x')\leq C|x'|^{m}, \quad\mbox{for}~|x'|<R_{1}$$
is assumed, the assertion on boundedness of $|\nabla u|$ when $|\Sigma'|>0$ can not be seen by simply sending $m\rightarrow\infty$ by the methods used there. The fact can also be observed in Subsection \ref{general case}, where more general $D_{1}$ and $D_{2}$ are considered.
\end{remark}

Next, we consider another interesting case when one convex but not strictly convex inclusion $D_{1}$ is very close to the boundary $\partial D$ with partial flatness. See Figure \ref{fig2}. Denote $\widetilde{\Omega}:=D\setminus\overline{D}_1$. The smoothness assumptions are the same as before, just replacing the assumption of $\partial D_{2}$ to $\partial D$, correspondingly, $h_{2}(x')$ to $h(x')$.

Similarly, we consider the following perfect conductivity problem
\begin{equation}\label{equinfty2}
\begin{cases}
\Delta{u}=0,& \mbox{in}~\widetilde{\Omega},\\
u=C_{1},&\mbox{on}~\overline{D}_{1},\\
\int_{\partial{D}_{1}}\frac{\partial{u}}{\partial\nu}\big|_{+}=0,&\\
u=\varphi,&\mbox{on}~\partial{D}.
\end{cases}
\end{equation}
Then we have

\begin{figure}[t]
\begin{minipage}[c]{0.9\linewidth}
\centering
\includegraphics[width=1.8in]{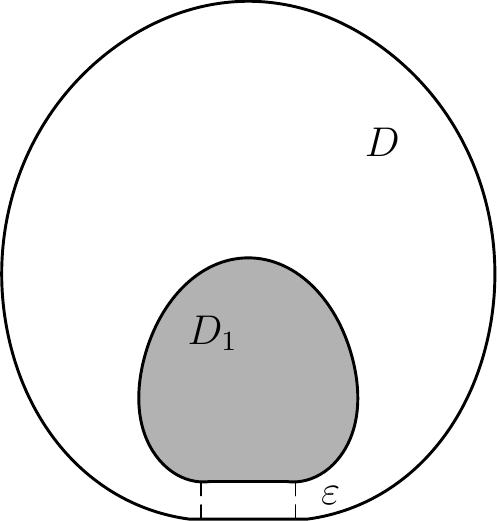}
\caption{\small One ``flat'' inclusion close to the matrix boundary.}
\label{fig2}
\end{minipage}
\end{figure}

\begin{theorem}\label{thm2}
Let $D_{1}\subset{D}\subset\mathbb{R}^{n}$ ($n\geq2$) be defined as above. Let
$u\in{H}^{1}(D)\cap{C}^{1}(\widetilde{\Omega})$
be the solution to \eqref{equinfty2}, then we have
\begin{align}\label{upperbound2'}
|\nabla{u}(x)|
&\leq\frac{C\varepsilon}{|\Sigma'|+\rho_n(\varepsilon)}\frac{|\widetilde{Q}[\varphi]|}{\varepsilon+dist^2(x',\Sigma')} +\frac{|\varphi(x', h(x'))-\varphi(0)|}{\varepsilon+dist^2(x',\Sigma')}+C||\varphi||_{C^{2}(\partial D)},\quad\,x\in\Omega_{R_{1}},
\end{align}
and
\begin{align}\label{upperbound2 2'}
|\nabla{u}(x)|
\leq C|\widetilde{Q}[\varphi]|+C||\varphi||_{C^{2}(\partial D)},\quad\,x\in\widetilde{\Omega}\setminus\Omega_{R_{1}},
\end{align}
where 
\begin{equation*}
\widetilde{Q}[\varphi]=\int_{\partial{D}_{1}}\frac{\partial{v}_{0}}{\partial\nu}
\end{equation*}
is a bounded functional of $\varphi$, and $v_0\in{C}^{2}(\widetilde{\Omega})$ is the solution of
\begin{equation}\label{equ_v0}
\begin{cases}
\Delta{v}_{0}=0,&\mathrm{in}~\widetilde{\Omega},\\
v_{0}=0,&\mathrm{on}~\partial{D}_{1},\\
v_{0}=\varphi(x)-\varphi(0),&\mathrm{on}~\partial{D}.
\end{cases}
\end{equation}
\end{theorem}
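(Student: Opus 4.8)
We outline how we would establish Theorem~\ref{thm2}, following the decomposition scheme used for Theorem~\ref{thm1} but with the lower perfect conductor replaced by the fixed matrix boundary $\partial D$.

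The plan is to introduce the harmonic function $v_1\in C^{2}(\widetilde{\Omega})$ with $v_1=1$ on $\partial D_1$ and $v_1=0$ on $\partial D$, and keep $v_0$ as in \eqref{equ_v0}. Since $u-\varphi(0)$ is harmonic in $\widetilde{\Omega}$, equals the constant $C_1-\varphi(0)$ on $\partial D_1$, and equals $\varphi-\varphi(0)$ on $\partial D$, one has the exact splitting
\begin{equation*}
u=(C_1-\varphi(0))\,v_1+v_0+\varphi(0),\qquad \nabla u=(C_1-\varphi(0))\,\nabla v_1+\nabla v_0\quad\text{in }\widetilde{\Omega}.
\end{equation*}
Substituting into the flux constraint $\int_{\partial D_1}\partial_\nu u|_+=0$ and using $\int_{\partial D_1}\partial_\nu v_1|_+=-\int_{\widetilde{\Omega}}|\nabla v_1|^{2}$ (integration by parts, $v_1=0$ on $\partial D$) together with $\int_{\partial D_1}\partial_\nu v_0|_+=\widetilde{Q}[\varphi]$, this determines
\begin{equation*}
C_1-\varphi(0)=\frac{\widetilde{Q}[\varphi]}{\int_{\widetilde{\Omega}}|\nabla v_1|^{2}}.
\end{equation*}
Hence the estimate reduces to a lower bound for the energy $\int_{\widetilde{\Omega}}|\nabla v_1|^{2}$ together with pointwise gradient bounds for $v_1$ and $v_0$.

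For the energy, we would argue on vertical segments through the narrow region: for each $x'$ with $|x'|<R_1$ the function $v_1$ runs from $0$ to $1$ as $x_n$ increases from $h(x')$ to $\varepsilon+h_1(x')$, so Cauchy--Schwarz in $x_n$ gives $\int_{h(x')}^{\varepsilon+h_1(x')}|\partial_{x_n}v_1|^{2}\,dx_n\ge(\varepsilon+h_1(x')-h(x'))^{-1}$. Integrating in $x'$ over $B'_{R_1}$, and using $h_1=h\equiv0$ on $\Sigma'$ together with \eqref{h1---h2}--\eqref{h1----h2} (which yield $h_1-h\ge\kappa_0\,dist^{2}(x',\Sigma')$ off $\Sigma'$) and \eqref{h1h3} (which yield $h_1-h\le C\,dist^{2}(x',\Sigma')$), the classical one-variable integral estimate produces
\begin{equation*}
\int_{\widetilde{\Omega}}|\nabla v_1|^{2}\ \ge\ \int_{B'_{R_1}}\frac{dx'}{\varepsilon+h_1(x')-h(x')}\ \ge\ \frac1C\,\frac{|\Sigma'|+\rho_n(\varepsilon)}{\varepsilon}.
\end{equation*}
Combined with the formula for $C_1-\varphi(0)$ this gives $|C_1-\varphi(0)|\le C\varepsilon|\widetilde{Q}[\varphi]|/(|\Sigma'|+\rho_n(\varepsilon))$, and, since $\varepsilon\le\rho_n(\varepsilon)\le|\Sigma'|+\rho_n(\varepsilon)$ for small $\varepsilon$, also $|C_1-\varphi(0)|\le C|\widetilde{Q}[\varphi]|$.

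For the pointwise bounds, the plan is to repeat, essentially verbatim, the construction used for Theorem~\ref{thm1} (see also \cite{bly1,LX}): build an auxiliary function interpolating the boundary data linearly across the thin gap, estimate its gradient directly, and then transfer the bound to $v_1$ and $v_0$ via the iteration based on the maximum principle. This gives
\begin{equation*}
|\nabla v_1(x)|\le \frac{C}{\varepsilon+dist^{2}(x',\Sigma')}\quad\text{in }\Omega_{R_1},\qquad \|\nabla v_1\|_{L^\infty(\widetilde{\Omega}\setminus\Omega_{R_1})}\le C,
\end{equation*}
while, because the data of $v_0$ is $0$ on $\partial D_1$ and $\varphi(x',h(x'))-\varphi(0)$ on $\partial D$ (so the interpolant carries the oscillation $\varphi(x',h(x'))-\varphi(0)=O(|x'|)$ near the origin),
\begin{equation*}
|\nabla v_0(x)|\le \frac{|\varphi(x',h(x'))-\varphi(0)|}{\varepsilon+dist^{2}(x',\Sigma')}+C\|\varphi\|_{C^{2}(\partial D)}\quad\text{in }\Omega_{R_1},\qquad \|\nabla v_0\|_{L^\infty(\widetilde{\Omega}\setminus\Omega_{R_1})}\le C\|\varphi\|_{C^{2}(\partial D)}.
\end{equation*}

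Finally we would combine these: in $\Omega_{R_1}$, $|\nabla u|\le|C_1-\varphi(0)|\,|\nabla v_1|+|\nabla v_0|$, and inserting the bounds above yields \eqref{upperbound2'}; in $\widetilde{\Omega}\setminus\Omega_{R_1}$, $|\nabla u|\le C|C_1-\varphi(0)|+C\|\varphi\|_{C^{2}(\partial D)}\le C|\widetilde{Q}[\varphi]|+C\|\varphi\|_{C^{2}(\partial D)}$, which is \eqref{upperbound2 2'}. The hard part will be the pointwise gradient bounds for $v_1$ (and $v_0$) inside $\Omega_{R_1}$: one has to carry the estimate uniformly across the ``edge'' $\partial\Sigma'$, where the top and bottom profiles pass from flat to strictly convex — precisely the place where \eqref{h1---h2}--\eqref{h1----h2} are used — and one must also verify that $\widetilde{Q}[\varphi]$ is well defined and a bounded linear functional of $\varphi$ even though $\partial D_1$ collapses onto $\partial D$ as $\varepsilon\to0$.
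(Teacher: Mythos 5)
Your proposal follows essentially the same route as the paper: the decomposition $u=(C_1-\varphi(0))v_1+v_0+\varphi(0)$, the identity $C_1-\varphi(0)=\widetilde{Q}[\varphi]\big/\int_{\widetilde{\Omega}}|\nabla v_1|^2$ from the flux constraint, the lower bound $\int_{\widetilde{\Omega}}|\nabla v_1|^{2}\geq\frac{1}{C\varepsilon}\left(|\Sigma'|+\rho_n(\varepsilon)\right)$ (the paper's Lemma~\ref{lemma_a11} adapted to $\widetilde{\Omega}$), and the pointwise gradient bounds for $v_1$ and $v_0$ obtained from the linear interpolants across the gap together with the energy-iteration scheme (the paper's Propositions~\ref{prop1} and~\ref{prop2}). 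The only cosmetic difference is that you get the energy lower bound directly by Cauchy--Schwarz on vertical segments rather than by integrating the pointwise lower bound on $|\nabla v_1|$; both yield the same estimate, so the argument is correct and matches the paper's proof.
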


\begin{remark}
(1)  From (\ref{upperbound2'}), we can see that if $|\Sigma'|>0$, then for sufficiently small $\varepsilon>0$ (such that $\rho_{n}(\varepsilon)<|\Sigma'|$), we have
\begin{align*}
|\nabla u(x)|\leq\frac{|\varphi(x', h(x'))-\varphi(0)|}{\varepsilon+dist^2(x',\Sigma')}+C|\widetilde{Q}[\varphi]|+C\|\varphi\|_{C^2(\partial D)},\quad x\in \Omega_{R_1},
\end{align*}
In particular, $|\nabla u|$ is bounded and there is no blowup occurring when $\varphi\equiv C$ in $\Omega_{R_1}$ (some positive constant).

(2) If $|\Sigma'|=0$ (that is, $\Sigma'=\{0'\}$), then we have
\begin{align*}
|\nabla u(x)|\leq \frac{C|\widetilde{Q}[\varphi]|}{\rho_n(\varepsilon)}+\frac{|\varphi(x', h(x'))-\varphi(0)|}{\varepsilon+|x'|^2}+C\|\varphi\|_{C^2(\partial D)},\quad x\in \Omega_{R_1},
\end{align*}
which shows that
\begin{align*}
|\nabla u(0',x_n)|\leq \frac{C|\widetilde{Q}[\varphi]|}{\rho_n(\varepsilon)}+C\|\varphi\|_{C^2(\partial D)},\quad 0<x_n<\varepsilon.
\end{align*}
From the proof of Theorem \ref{thm2},  if $|\widetilde{Q}[\varphi]|\geq c^*$ for some universal constant $c^*>0$, we also can obtain the lower bound of $|\nabla u(0',x_n)|$ as follows,
\begin{align*}
|\nabla u(0,x_n)|\geq \frac{|\widetilde{Q}[\varphi]|}{C\rho_n(\varepsilon)},\quad 0<x_n<\varepsilon.
\end{align*}
See the results in \cite{LX}.

(3) The relative convexity assumptions on $\partial D_{1}$ and $\partial D$ can also be weakened, which is similar to the discussions in Subsection \ref{general case} except replacing $\partial D_{2}$ by $\partial D$ and $h_{2}(x')$ by $h(x')$.
\end{remark}

Finally, from the view of methodology, the result of Theorem \ref{thm1} and Theorem \ref{thm2} can be extended to the general elliptic equations with a divergence form. For readers' convenience,  we here extend Theorem \ref{thm1} to such general elliptic equations. The analog of Theorem \ref{thm2} is left to the interested readers. Let $n,D_{1},D_{2},D,\varepsilon$ and $\varphi$ be the same as in Theorem \ref{thm1}, and let
$A_{ij}(x)\in C^{2}(\Omega)$ be $n\times{n}$ symmetric matrix functions and satisfy the uniform ellipticity condition
\begin{equation}\label{ene01a}
\lambda|\xi|^{2}\leq A_{ij}(x)\xi_{i}\xi_{j}\leq\Lambda|\xi|^{2},\quad\forall~\xi\in\mathbb R^{n}, \quad x\in \Omega,
\end{equation}
where $0<\lambda\leq\Lambda<+\infty$. We consider
\begin{equation}\label{equinftydiv}
\begin{cases}
\partial_{i}\left(A_{ij}(x)\partial_{j} u\right)=0,&\mbox{in}~\Omega,\\
u=C_{k},&\mbox{on}~\overline{D}_{k}, k=1,2,\\
\int_{\partial{D}_{k}}(A_{ij}(x)\partial_{j}u)\nu_{i}\big|_{+}=0,&k=1,2,\\
u=\varphi,&\mbox{on}~\partial{D}.
\end{cases}
\end{equation}
Then
\begin{theorem}\label{thm1.6}
Under the assumptions in Theorem \ref{thm1} for $D_{1}, D_{2}\subset{D}$. Let
$u\in{H}^{1}(D)\cap{C}^{1}(\overline{\Omega})$
be the solution to \eqref{equinftydiv}. Then the estimates \eqref{upperbound1'} and \eqref{upperbound10} also hold.
\end{theorem}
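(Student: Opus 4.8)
The plan is to follow the architecture of the proof of Theorem~\ref{thm1}, replacing at each step the explicit harmonic comparison functions by energy estimates, since $\partial_i(A_{ij}\partial_j\cdot)$ admits no explicit solutions. Write $u=C_1v_1+C_2v_2+v_0$, where $v_1,v_2,v_0\in H^1(\Omega)$ solve $\partial_i(A_{ij}\partial_j\cdot)=0$ in $\Omega$ with $v_l=\delta_{kl}$ on $\partial D_k$ and $v_l=0$ on $\partial D$ ($l=1,2$), and $v_0=0$ on $\partial D_1\cup\partial D_2$, $v_0=\varphi$ on $\partial D$; also set $v_b:=1-v_1-v_2$, the solution with $v_b=0$ on $\partial D_1\cup\partial D_2$ and $v_b=1$ on $\partial D$. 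Then on $\Omega$ one has $\nabla u=(C_1-C_2)\nabla v_1-C_2\nabla v_b+\nabla v_0$, so it suffices to prove: (a) $|\nabla v_1(x)|\le C/(\varepsilon+\mathrm{dist}^2(x',\Sigma'))$ for $x\in\Omega_{R_1}$ and $|\nabla v_1|\le C$ on $\Omega\setminus\Omega_{R_1}$, with the same for $v_2$; (b) $|\nabla v_0|\le C\|\varphi\|_{C^2(\partial D)}$ and $|\nabla v_b|\le C$ on all of $\Omega$; (c) $|C_1-C_2|\le C\varepsilon\|\varphi\|_{C^2(\partial D)}/(|\Sigma'|+\rho_n(\varepsilon))$ and $|C_i|\le\|\varphi\|_{C^2(\partial D)}$. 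Granting (a)--(c) and substituting into the above decomposition of $\nabla u$ gives \eqref{upperbound1'} and \eqref{upperbound10}.

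For (a), introduce the linear interpolant $\bar v_1(x',x_n):=\bigl(x_n-h_2(x')\bigr)/\bigl(\varepsilon+h_1(x')-h_2(x')\bigr)$ on $\Omega_{R_1}$, extended to $\Omega$ so as to match the boundary data of $v_1$ and satisfy $|\nabla\bar v_1|\le C$ on $\Omega\setminus\Omega_{R_1/2}$. Using \eqref{h1---h2}--\eqref{h1h3}, together with the elementary fact (a Taylor expansion from $\partial\Sigma'$ via \eqref{h1---h2}--\eqref{h1----h2}) that $\varepsilon+h_1(x')-h_2(x')\asymp\varepsilon+\mathrm{dist}^2(x',\Sigma')$ on $B'_{R_1}$, one checks $|\nabla\bar v_1(x)|\le C/(\varepsilon+\mathrm{dist}^2(x',\Sigma'))$ and a corresponding bound for $\nabla^2\bar v_1$. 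The remainder $w:=v_1-\bar v_1$ vanishes on the top and bottom of $\Omega_{R_1}$ and solves $\partial_i(A_{ij}\partial_j w)=-\partial_i(A_{ij}\partial_j\bar v_1)$ there. Partition $\Omega_{R_1}$ into the core $\{\mathrm{dist}(x',\partial\Sigma')\lesssim\sqrt\varepsilon\}$, on which the gap is $\asymp\varepsilon$, and the dyadic shells $\{2^{-m-1}\le\mathrm{dist}(x',\partial\Sigma')\le 2^{-m}\}$ with $\sqrt\varepsilon\lesssim 2^{-m}\lesssim R_1$; on each piece run a Caccioppoli-type estimate and apply the iteration scheme of \cite{bll} to absorb the inhomogeneous term generated by $\bar v_1$, obtaining a scale-consistent bound for the local Dirichlet integral of $w$. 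Rescaling each piece to unit size and invoking the classical interior and boundary $C^{1,\alpha}$ estimates for scalar divergence-form equations (available here precisely because the scalar maximum principle holds, unlike for the Lam\'{e} system treated in \cite{bll}) then yields $|\nabla w(x)|\le C/(\varepsilon+\mathrm{dist}^2(x',\Sigma'))$ on $\Omega_{R_1}$; since the gap is bounded below away from $\Omega_{R_1}$, standard estimates give $|\nabla v_1|\le C$ there. In parallel one records $M:=\int_\Omega A_{ij}\partial_iv_1\partial_jv_1\asymp(|\Sigma'|+\rho_n(\varepsilon))/\varepsilon$ (and the same for $v_2$), the lower bound obtained by restricting the Dirichlet integral to a thin sub-region around $\Sigma'$ and using uniform ellipticity, exactly as in the Laplacian case; and, from the pointwise bound together with $\varepsilon+h_1-h_2\asymp\varepsilon+\mathrm{dist}^2(x',\Sigma')$, that $\|\nabla v_1\|_{L^1(\Omega)}\le C$.

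For (b): an $A$-harmonic function on $\Omega_{R_1}$ vanishing on the top and bottom decays exponentially toward the thin part, $\|v\|_{L^\infty(\Omega_r)}\le Ce^{-c/\varepsilon}\|v\|_{L^\infty(\Omega_{R_1})}$ for $r$ bounded away from $R_1$, by Phragm\'{e}n--Lindel\"{o}f comparison with supersolutions of exponential type, using only the maximum principle. Since $\|v_0\|_{L^\infty(\Omega)}\le C\|\varphi\|_{C^2(\partial D)}$ and $\|v_b\|_{L^\infty(\Omega)}\le1$ by the maximum principle, combining this decay with a rescaling argument gives $|\nabla v_0|\le C\|\varphi\|_{C^2(\partial D)}$ and $|\nabla v_b|\le C$ throughout $\Omega$. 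For (c): inserting $u=C_1v_1+C_2v_2+v_0$ into the flux conditions of \eqref{equinftydiv} and integrating by parts against $v_k$ converts them into the symmetric system $\sum_l a_{kl}C_l=-q_k$, with $a_{kl}:=\int_\Omega A_{ij}\partial_iv_k\partial_jv_l$ and $q_k:=\int_\Omega A_{ij}\partial_iv_0\partial_jv_k$. By (a)--(b), $|q_k|\le\|\nabla v_0\|_{L^\infty(\Omega)}\|\nabla v_k\|_{L^1(\Omega)}\le C\|\varphi\|_{C^2(\partial D)}$; moreover $a_{11}+a_{12}=\int_\Omega A_{ij}\partial_iv_1\partial_j(v_1+v_2)=-\int_\Omega A_{ij}\partial_iv_1\partial_jv_b$ and likewise $a_{22}+a_{12}$ are $O(1)$, the largest eigenvalue of $(a_{kl})$ is comparable to $M$, and its smallest eigenvalue is bounded below by $c\int_\Omega A_{ij}\partial_iv_b\partial_jv_b\ge c'>0$ uniformly in $\varepsilon$, so $\det(a_{kl})\ge M/C$. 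Solving the $2\times2$ system, $C_1-C_2=\bigl(q_2(a_{11}+a_{12})-q_1(a_{22}+a_{12})\bigr)/\det(a_{kl})$, whence $|C_1-C_2|\le C\|\varphi\|_{C^2(\partial D)}/M\le C\varepsilon\|\varphi\|_{C^2(\partial D)}/(|\Sigma'|+\rho_n(\varepsilon))$; finally $|C_i|\le\|\varphi\|_{L^\infty(\partial D)}$ because the scalar maximum principle together with Hopf's lemma applied to the flux conditions forces $\min_{\partial D}\varphi\le C_i\le\max_{\partial D}\varphi$.

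The main obstacle is (a) for the variable-coefficient operator: with no explicit barrier at hand one must make the energy iteration of \cite{bll} mesh with the degenerate geometry, which here means organizing the dyadic decomposition around the $(n-1)$-dimensional flat piece $\Sigma'$ rather than a single point, so that $\mathrm{dist}(x',\Sigma')$ is the governing length scale and the dimensional factor $\rho_n(\varepsilon)$ is reproduced correctly through the rescalings, and isolating the core neighborhood of $\Sigma'$ where the gap is uniformly $\asymp\varepsilon$. The exponential-decay lemma used in (b), although classical for the Laplacian, must also be re-derived for $A$-harmonic functions using only the maximum principle.
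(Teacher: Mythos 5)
Your overall architecture (decomposition $u=C_1v_1+C_2v_2+v_0$, linear interpolant plus Caccioppoli iteration, energy identity $-a_{11}=\int_\Omega A_{ij}\partial_iv_1\partial_jv_1$, and the $2\times2$ system for $C_1-C_2$) is exactly the paper's, and your treatment of (b) and (c) is sound, if by a slightly different route for $v_b$ and $v_0$ (Phragm\'en--Lindel\"of decay rather than citing \cite{llby}). But there is a genuine gap at the central step (a): you take the plain interpolant $\bar v_1=(x_n-h_2)/(\varepsilon+h_1-h_2)$ and assert ``a corresponding bound for $\nabla^2\bar v_1$'' so that $\partial_i(A_{ij}\partial_j\bar v_1)$ can be absorbed exactly as $\Delta\bar u_1$ is in \eqref{nabla2u_bar}. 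This is false when the off-diagonal coefficients $A_{in}$, $i<n$, do not vanish. Indeed $\partial_{x_n}\bar v_1=1/\delta(x')$ gives $\partial_{x_i}\partial_{x_n}\bar v_1=-\partial_{x_i}(h_1-h_2)/\delta^2$, which by \eqref{h1---h2}--\eqref{h1h3} is of size $d(x')/\delta^2(x')$, not $1/\delta(x')$; the cross term $2A_{in}\partial_{x_ix_n}\bar v_1$ therefore makes $\partial_i(A_{ij}\partial_j\bar v_1)$ larger than $C/(\varepsilon+d^2(x'))$ by a factor $d(x')/\delta(x')\sim\varepsilon^{-1/2}$ in the critical region $d(x')\sim\sqrt\varepsilon$. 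For the Laplacian this term simply does not occur (no mixed derivatives in $\Delta$), which is why Theorem \ref{thm1} works with $\bar u_1$ alone; for \eqref{equinftydiv} it is precisely the obstruction. Note also that you cannot dodge it by leaving the source in divergence form in the Caccioppoli inequality, since testing $\partial_i(A_{ij}\partial_j\bar v_1)$ against $\nabla(\eta^2w)$ reintroduces the full Dirichlet energy $\int|\nabla\bar v_1|^2\sim s^{n-1}/\delta$, which is exactly what the iteration is trying to beat.

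The paper's fix is the corrector in \eqref{utilde}: one replaces $\bar u_1$ by $\tilde u_1=\bar u_1+\frac{\sum_iA_{ni}\partial_{x_i}(h_1-h_2)}{4A_{nn}}(\psi^2-1)$ with $\psi=\frac{2x_n-(\varepsilon+h_1+h_2)}{\delta}$, whose $A_{nn}\partial_{x_nx_n}$ contribution, $8A_{nn}\cdot\frac{\sum_iA_{ni}\partial_{x_i}(h_1-h_2)}{4A_{nn}\delta^2}$, exactly cancels the offending $2\sum_iA_{in}\partial_{x_ix_n}\bar u_1=-2\sum_iA_{in}\partial_{x_i}(h_1-h_2)/\delta^2$, restoring \eqref{f_tilde2} and vanishing to first order on $\Sigma$ so that none of the other estimates (including the two-sided bound $|\nabla\tilde u_1|\asymp1/(\varepsilon+d^2)$) is disturbed; after that your iteration goes through verbatim. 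Without the corrector, tracing your scheme only yields $|\nabla(v_1-\bar v_1)|\lesssim d(x')/(\varepsilon+d^2(x'))$, which still happens to imply the upper bound $|\nabla v_1|\le C/(\varepsilon+d^2(x'))$ but not the estimate $\|\nabla(v_1-\bar v_1)\|_{L^\infty}\le C$ you implicitly rely on, and it puts at risk the pointwise lower bound on $|\nabla v_1|$ away from $\Sigma'$ on which the two-sided estimate of $-a_{11}$ (hence the $\rho_n(\varepsilon)$ term in the denominator of \eqref{upperbound1'}) rests; you would need a separate argument (e.g.\ Cauchy--Schwarz across each vertical fiber of the gap) to recover the lower bound $-a_{11}\gtrsim(|\Sigma'|+\rho_n(\varepsilon))/\varepsilon$, which you only gesture at.
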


The rest of this paper is organized as follows. In Section \ref{sec_thm1}, we first decompose the solution $u$, and then use the energy method and the iteration technique initiated in \cite{llby,bll} to prove Theorem \ref{thm1}. A long remark is given Subsection \ref{general case} for more general inclusions $D_{1}$ and $D_{2}$. In Section \ref{extend}, we give the proof of Theorem \ref{thm2} for the boundary estimates. In Section \ref{sec_thm1.6}, the main ingredients to prove Theorem \ref{thm1.6} for general elliptic equations with a divergence form is listed.

\section{Proof of Theorem \ref{thm1}}\label{sec_thm1}

We decompose the solution $u(x)$ of \eqref{equinfty1} as follows
\begin{equation}\label{decomposition1_u}
u(x)=C_{1}v_{1}(x)+C_{2}v_{2}(x)+v_{3}(x),\qquad~x\in\,\Omega ,
\end{equation}
where $v_{j}\in{C}^{2}(\Omega)~(j=1,2,3)$, respectively, satisfying
\begin{equation}\label{equ_v1}
\begin{cases}
\Delta{v}_{1}=0,&\mathrm{in}~\Omega,\\
v_{1}=1,&\mathrm{on}~\partial{D}_{1},\\
v_{1}=0,&\mathrm{on}~\partial{D_{2}}\cup\partial{D},
\end{cases}
\end{equation}
\begin{equation}\label{equ_v2}
\begin{cases}
\Delta{v}_{2}=0,&\mathrm{in}~\Omega,\\
v_{2}=1,&\mathrm{on}~\partial{D}_{2},\\
v_{2}=0,&\mathrm{on}~\partial{D_{1}}\cup\partial{D},
\end{cases}
\end{equation}
\begin{equation}\label{equ_v3}
\begin{cases}
\Delta{v}_{3}=0,&\mathrm{in}~\Omega,\\
v_{3}=0,&\mathrm{on}~\partial{D}_{1}\cup\partial{D_{2}},\\
v_{3}=\varphi,&\mathrm{on}~\partial{D}.
\end{cases}
\end{equation}
Then by \eqref{decomposition1_u}, we have
\begin{align}\label{decomposition_u2}
\nabla{u}(x)=&C_{1}\nabla{v}_{1}(x)+C_{2}\nabla{v}_{2}(x)+\nabla{v}_{3}(x)\nonumber\\
=&(C_{1}-C_{2})\nabla{v}_{1}(x)+C_{2}\nabla({v}_{1}+{v}_{2})(x)+\nabla{v}_{3}(x),\qquad~x\in\,\Omega.
\end{align}
We need to estimate $|\nabla v_1|,\ |\nabla (v_1+v_2)|,\ |\nabla v_3|,\ |C_i| (i=1,2)$ and $|C_1-C_2|$ in order.

\subsection{Outline of the proof of Theorem \ref{thm1}}\label{proof of thm1}

In order to estimate $|\nabla v_j|,\ j=1, 2, 3$, we introduce an auxiliary function $\bar{u}_{1}\in{C}^{2}(\mathbb{R}^{n})$, such that $\bar{u}_{1}=1$ on $\partial{D}_{1}$, $\bar{u}_{1}=0$ on $\partial{D}_{2}\cup\partial D$,
\begin{align*}
\bar{u}_{1}(x', x_n)
=\frac{x_{n}-h_{2}(x')}{\varepsilon+h_{1}(x')-h_{2}(x')},\ \ \qquad~(x', x_n)\in\,\Omega_{R_{1}},
\end{align*}
and
\begin{equation*}
\|\bar{u}_{1}\|_{C^{2}(\Omega\setminus \Omega_{R_{1}})}\leq\,C.
\end{equation*}
Similarly, we define $\bar{u}_{2}=1$ on $\partial{D}_{2}$, $\bar{u}_{2}=0$ on $\partial{D}_{1}\cup\partial D$, $\bar{u}_{2}=1-\bar{u}_{1}$ in $\Omega_{R_{1}}$, and $\|\bar{u}_{2}\|_{C^{2}(\Omega\setminus \Omega_{R_{1}})}\leq\,C$.

Denote
\begin{equation*}
\delta(x'):=\varepsilon+h_{1}(x')-h_{2}(x'),\qquad\forall~(x',x_{n})\in\Omega_{R_{1}},
\end{equation*}
and
$$d(x'):=d_{\Sigma'}(x')=dist(x',\Sigma').$$
Using the assumptions on $h_{1}$ and $h_{2}$, \eqref{h1h2}--\eqref{h1h3}, a direct calculation gives
\begin{equation*}
\partial_{x_{i}}\bar{u}_{1}(x)=0,\quad\,i=1,\cdots,n-1,\quad
\partial_{x_{n}}\bar{u}_{1}(x)=\frac{1}{\varepsilon},\quad~x\in\Sigma,
\end{equation*}
and
\begin{equation*}
\left|\partial_{x_{i}}\bar{u}_{1}(x)\right|\leq\frac{Cd(x')}{\varepsilon+d^{2}(x')},\quad\,i=1,\cdots,n-1,\quad
\partial_{x_{n}}\bar{u}_{1}(x)=\frac{1}{\delta(x')},\quad~x\in\Omega_{R_{1}}\setminus\Sigma,
\end{equation*}
where $C$ is a {\it universal constant}, independent of $|\Sigma'|$. Therefore,
\begin{equation}\label{nablau_bar1}
\frac{1}{C\left(\varepsilon+d^{2}(x')\right)}\leq|\nabla\bar{u}_{i}(x)|\leq\,\frac{C}{\varepsilon+d^{2}(x')},\qquad\,x\in\Omega_{R_{1}},~i=1,2,
\end{equation}

\begin{prop}\label{prop1}
Assume the above, let $v_{1}, v_{2}, v_{3}\in{H}^1(\Omega)$ be the
weak solutions of \eqref{equ_v1}, \eqref{equ_v2} and \eqref{equ_v3}, respectively. Then, there exists some universal constant $\varepsilon_{1}>0$, such that for $0<\varepsilon<\varepsilon_{1}$, we have
\begin{equation}\label{nabla_w_i0}
\|\nabla(v_{i}-\bar{u}_{i})\|_{L^{\infty}(\Omega)}\leq\,C,\quad i=1,2,
\end{equation}
consequently, by using \eqref{nablau_bar1},
\begin{equation}\label{v1-bounded1}
\frac{1}{C\left(\varepsilon+d^{2}(x')\right)}\leq|\nabla v_{i}(x)|\leq\,\frac{C}{\varepsilon+d^{2}(x')},\qquad\,x\in\Omega_{R_{1}},~i=1,2.
\end{equation}
\begin{equation}\label{v1--bounded1}
|\nabla v_{i}(x)|\leq\,C,\qquad\,x\in\Omega\setminus\Omega_{R_{1}},~i=1,2;
\end{equation}
and
\begin{equation}\label{v1+v2_bounded1}
|\nabla(v_{1}+v_{2})(x)|\leq\,C,\qquad\,x\in\Omega;
\end{equation}
\begin{equation}\label{nabla_v3}
|\nabla{v}_{3}(x)|\leq C||\varphi||_{L^{\infty}(\partial D)},\quad\,x\in\Omega;
\end{equation}
where $C$ is a {\it universal constant}, independent of $|\Sigma'|$.
\end{prop}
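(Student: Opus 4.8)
The plan is to prove Proposition \ref{prop1} by comparing each $v_i$ to the explicit auxiliary barrier $\bar u_i$ and showing the difference $w_i := v_i - \bar u_i$ has bounded gradient, using the energy method together with the iteration/De Giorgi-type scheme of \cite{llby,bll}. Since $w_i$ vanishes on $\partial D_1\cup\partial D_2\cup\partial D$ and satisfies $\Delta w_i = -\Delta\bar u_i$ in $\Omega$, the first step is to bound the right-hand side: using \eqref{h1h2}--\eqref{h1h3} and in particular the flatness $h_1\equiv h_2\equiv 0$ on $\Sigma'$, one computes $\Delta\bar u_i = 0$ on $\Sigma$ and $|\Delta\bar u_i(x)| \le C/(\varepsilon+d^2(x'))$ on $\Omega_{R_1}\setminus\Sigma$, with $|\Delta\bar u_i|\le C$ outside $\Omega_{R_1}$. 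The crucial point that makes the flat case work is that $\Delta\bar u_i$ is \emph{integrable} over $\Omega_{R_1}$ with a bound independent of $\varepsilon$ (the singular factor $1/(\varepsilon+d^2)$ is integrated against the thin-strip measure and the region where $d$ is small contributes a bounded amount), which is in sharp contrast with the strictly convex case.

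Next I would set up the energy estimate. Multiplying $\Delta w_i = -\Delta\bar u_i$ by $w_i$ and integrating by parts over $\Omega$ (boundary terms vanish since $w_i=0$ on $\partial\Omega$) gives $\int_\Omega |\nabla w_i|^2 = \int_\Omega \Delta\bar u_i \, w_i$; combined with a Poincar\'e-type inequality adapted to the thin neck (with constant controlled by the geometry, not by $\varepsilon$) this yields a global bound $\int_\Omega |\nabla w_i|^2 \le C$. The step from this $L^2$ energy bound to the pointwise gradient bound \eqref{nabla_w_i0} is the heart of the argument: one localizes to small cylinders $\Omega_t = \{h_2(x') < x_n < \varepsilon+h_1(x'),\ |x'-z'|<t\}$, rescales the thin cylinder to a unit-size domain by the change of variables stretching the $x_n$ variable by $\delta(x')$, and runs the Caccioppoli + Sobolev iteration on a sequence of shrinking radii to upgrade $\|\nabla w_i\|_{L^2}$ to $\|\nabla w_i\|_{L^\infty}$. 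Near $\Sigma'$ (where $\delta \approx \varepsilon$ is genuinely thin) the rescaled domain is a flat slab and standard interior/boundary elliptic estimates apply uniformly; away from $\Sigma'$ one has $\delta(x') \gtrsim \varepsilon + d^2(x')$ by \eqref{h1----h2}, so the same rescaling works with the local length scale $\delta(x')$.

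Once \eqref{nabla_w_i0} is established, the remaining assertions are corollaries. Estimate \eqref{v1-bounded1} follows by the triangle inequality from \eqref{nabla_w_i0} and \eqref{nablau_bar1}, and \eqref{v1--bounded1} follows because outside $\Omega_{R_1}$ the function $v_i$ is harmonic in a domain bounded away from the touching point with smooth boundary data, so standard elliptic estimates (or again the comparison with $\bar u_i$, whose $C^2$ norm is bounded there) give the bound. For \eqref{v1+v2_bounded1} the key observation is that $v_1+v_2$ equals $1$ on $\partial D_1\cup\partial D_2$ and $0$ on $\partial D$, so $v_1+v_2-1$ is harmonic with smooth, $\varepsilon$-independent boundary data on a fixed-geometry region — there is no thin-gap singularity at all, hence $|\nabla(v_1+v_2)|\le C$ everywhere; equivalently, the barrier $\bar u_1+\bar u_2 \equiv 1$ in $\Omega_{R_1}$ already has zero gradient there. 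Finally \eqref{nabla_v3} is the classical $W^{1,\infty}$ estimate for the harmonic function $v_3$ with $\|v_3\|_{L^\infty}\le \|\varphi\|_{L^\infty(\partial D)}$ on a domain of fixed geometry, plus the comparison near the neck where $v_3$ is squeezed between $0$ and small values so $|\nabla v_3|\le C\|\varphi\|_{L^\infty}/(\text{local scale}) \cdot (\text{oscillation})$, which stays bounded because the oscillation of the boundary data contributing to $v_3$ across the neck is comparable to the neck size. The main obstacle is the rescaling-and-iteration step converting the energy bound into the uniform pointwise gradient bound \eqref{nabla_w_i0}, in particular verifying that all constants produced by the iteration depend only on the universal data and not on $\varepsilon$ or on $|\Sigma'|$; this requires a careful choice of the local length scale $\delta(x')$ uniformly across the transition region between the genuinely thin part over $\Sigma'$ and the quadratically-opening part outside.
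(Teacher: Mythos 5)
Your proposal follows essentially the same route as the paper: the decomposition $w_i=v_i-\bar u_i$, the computation that $\Delta\bar u_i$ vanishes on $\Sigma$ and is $O\big(1/(\varepsilon+d^2(x'))\big)$ (hence uniformly integrable) on $\Omega_{R_1}\setminus\Sigma$, the global energy bound followed by the Caccioppoli--iteration--rescaling upgrade to a pointwise gradient bound, and the observation that $v_1+v_2-1$ and $v_3$ vanish on both boundaries of the neck so the narrow-region estimate of \cite{llby} (equivalently, the identity $\bar u_1+\bar u_2\equiv1$ in $\Omega_{R_1}$) applies. One small correction: for the global energy bound the paper pairs $\|\Delta\bar u_1\|_{L^1(\Omega_{R_1})}\le C$ with $\|w_1\|_{L^\infty(\Omega)}\le C$ coming from the maximum principle ($0<v_1<1$), not with a Poincar\'e inequality --- a Cauchy--Schwarz/Poincar\'e pairing would require $\|\Delta\bar u_1\|_{L^2}$ to be uniformly bounded, which fails (it is of order $\varepsilon^{-1/4}$ in dimension two).
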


Define
\begin{equation}\label{def_a_ij}
a_{ij}:=\int_{\partial D_{i}}\frac{\partial v_{j}}{\partial\nu},\quad b_{i}:=\int_{\partial D_{i}}\frac{\partial v_{3}}{\partial\nu},\quad i,j=1,2.
\end{equation}
By the third line of \eqref{equinfty1}, $C_1$ and $C_2$ satisfy
\begin{align}\label{system}
\begin{cases}
C_{1}a_{11}+C_{2}a_{12}+b_{1}=0,\\
C_{1}a_{21}+C_{2}a_{22}+b_{2}=0.
\end{cases}
\end{align}

Similarly as Lemma 2.4 in \cite{bly1}, we still have the following estimates for this general case. The proof is very similar with that of Lemma 2.4 in \cite{bly1}. We omit it.
\begin{lemma}\label{lemma aij}
\begin{align*}
a_{12}=a_{21}>0,\ a_{11}<0,\ a_{22}<0,
\end{align*}
\begin{align}\label{a11+a12}
-C\leq a_{11}+a_{21}\leq-\frac{1}{C},\ \  -C\leq a_{22}+a_{12}\leq -\frac{1}{C},\end{align}
and \begin{align}\label{b1-b2}
|b_1|\leq C \|\varphi\|_{C^2(\partial D)},\quad |b_2|\leq C\|\varphi\|_{C^2(\partial D)}.
\end{align}
\end{lemma}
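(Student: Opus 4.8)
\emph{Proof plan.} I would prove the three groups of assertions in turn, writing $\nu$ for the unit normal that points out of $D_i$ along $\partial D_i$ (hence into $\Omega$) and out of $D$ along $\partial D$, with all normal derivatives on $\partial D_i$ understood from the $\Omega$-side as in \eqref{def_a_ij}.

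\emph{Symmetry and signs of the $a_{ij}$.} Since each $v_j$ is harmonic in $\Omega$ with $v_1\equiv1$ on $\partial D_1$, $v_1\equiv0$ on $\partial D_2\cup\partial D$ and symmetrically for $v_2$, Green's formula on $\Omega$ gives $a_{11}=-\int_\Omega|\nabla v_1|^2$, $a_{22}=-\int_\Omega|\nabla v_2|^2$, and $a_{12}=a_{21}=-\int_\Omega\nabla v_1\cdot\nabla v_2$; in particular $a_{12}=a_{21}$, while $a_{11},a_{22}<0$ because $v_1$ and $v_2$ are nonconstant. For $a_{12}>0$ I would invoke the maximum principle: $0<v_2<1$ in $\Omega$, so $v_2$ attains its minimum value $0$ along $\partial D_1$ and hence $\partial v_2/\partial\nu\ge 0$ there, giving $a_{12}\ge 0$; and $a_{12}=0$ would force $v_2$ to have vanishing Cauchy data on $\partial D_1$, hence $v_2\equiv 0$ in $\Omega$, contradicting $v_2\equiv1$ on $\partial D_2$.

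\emph{The estimate \eqref{a11+a12}.} Integrating $\Delta v_1=0$ over $\Omega$ and splitting $\partial\Omega=\partial D_1\cup\partial D_2\cup\partial D$ gives the identity $a_{11}+a_{21}=\int_{\partial D}\partial v_1/\partial\nu$ (and likewise $a_{22}+a_{12}=a_{22}+a_{21}=\int_{\partial D}\partial v_2/\partial\nu$). This is the heart of the matter: the right-hand side involves $v_1$ only on the \emph{fixed} boundary $\partial D$, far from the pinched region $\Omega_{R_1}$, which is exactly why these combinations remain $O(1)$ while the individual quantities $-a_{ii}=\int_\Omega|\nabla v_i|^2$ blow up as $\varepsilon\to0$. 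Since $v_1>0$ in $\Omega$ and $v_1=0$ on $\partial D$, Hopf's lemma gives $\partial v_1/\partial\nu<0$ on $\partial D$, so $a_{11}+a_{21}<0$; and because $D_1,D_2$ lie at a fixed positive distance from $\partial D$, the function $v_1$ is harmonic and bounded by $1$ on a fixed collar of $\partial D$ with vanishing $C^{2,\alpha}$ data there, so boundary Schauder estimates give $|\nabla v_1|\le C$ on $\partial D$ and hence $|a_{11}+a_{21}|\le C$. The only delicate point is the matching lower bound $|a_{11}+a_{21}|\ge 1/C$, which I would obtain by letting $\varepsilon\to0^+$: then $\Omega$ exhausts the fixed domain $\Omega_0=D\setminus\overline{D_1^0\cup D_2^0}$ and, by the uniform Schauder bounds near $\partial D$, $v_1$ subconverges in $C^1$ near $\partial D$ to a bounded harmonic function $v_1^\ast$ on $\Omega_0$ with $v_1^\ast=0$ on $\partial D$ and $v_1^\ast\equiv1$ on the nonempty relatively open portion $\partial D_1^0\setminus\overline{\Sigma'}$; thus $v_1^\ast\not\equiv0$, Hopf's lemma forces $\int_{\partial D}|\partial v_1^\ast/\partial\nu|$ to be a positive universal constant, and $C^1$-convergence transfers a uniform positive lower bound to $\int_{\partial D}|\partial v_1/\partial\nu|$ once $\varepsilon$ is below a universal threshold (shrinking $\varepsilon_1$ of Proposition \ref{prop1} if needed). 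Running the same argument with $v_2$ in place of $v_1$ yields $a_{22}+a_{12}\le -1/C$. (Equivalently, one may combine a Harnack chain from $\partial D_1$ to a fixed interior point near $\partial D$ with comparison against a fixed barrier; in either approach the point is that the relevant geometry is $\varepsilon$-independent, since $D_1^0,D_2^0$ are bounded away from $\partial D$ with controlled $C^{2,\alpha}$ norms.)

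\emph{The estimate \eqref{b1-b2}.} This is immediate from \eqref{nabla_v3} of Proposition \ref{prop1}, which gives $|\nabla v_3|\le C\|\varphi\|_{L^\infty(\partial D)}$ on $\overline\Omega$: since $|\partial D_i|$ is a universal constant, $|b_i|=\big|\int_{\partial D_i}\partial v_3/\partial\nu\big|\le|\partial D_i|\,\|\nabla v_3\|_{L^\infty(\Omega)}\le C\|\varphi\|_{C^2(\partial D)}$. Apart from the lower bound in \eqref{a11+a12}, every step is a routine application of Green's formula, the maximum principle, Hopf's lemma and Proposition \ref{prop1}; the argument parallels that of Lemma 2.4 in \cite{bly1}, the only change being that Proposition \ref{prop1}, which is valid for merely convex inclusions, replaces the corresponding bounds used there.
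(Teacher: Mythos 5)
Your proposal is correct, and it reconstructs exactly the argument the paper has in mind: the paper omits the proof of this lemma, deferring to Lemma 2.4 of \cite{bly1}, and the identities you use ($a_{11}=-\int_\Omega|\nabla v_1|^2$, $a_{12}=a_{21}=-\int_\Omega\nabla v_1\cdot\nabla v_2$, and $a_{11}+a_{21}=\int_{\partial D}\partial v_1/\partial\nu$) are precisely the ones the authors record later in the proof of Lemma \ref{lemma_a11} and in Remark \ref{rem2.5}. The signs via the maximum principle and Hopf's lemma, the upper bound via Schauder estimates on a fixed collar of $\partial D$, and the bound on $b_i$ via \eqref{nabla_v3} are all as expected. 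The one place where you deviate from the cited source is the lower bound $|a_{11}+a_{21}|\ge 1/C$: your primary route is a compactness argument as $\varepsilon\to0^+$, which is valid but (i) requires the justification you sketch that the subsequential limit $v_1^\ast$ is nontrivial, i.e.\ uniform boundary regularity of $v_1^\varepsilon$ up to $\partial D_1^\varepsilon$ away from the neck, and (ii) as stated produces a constant tied to the fixed limiting configuration $D_1^0,D_2^0,D$ rather than transparently to the parameters $n,\kappa_0,\kappa_1,R_1$ and the $C^{2,\alpha}$ bounds that define ``universal'' in this paper. The Harnack-chain-plus-barrier alternative you mention in passing is the route of \cite{bly1} and yields the constant with the correct dependence directly, so I would promote it to the main argument; with that adjustment the proof is complete.
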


However, the following is the main difference with the analog in \cite{bly1}. It will play a key role in the blowup analysis of $|\nabla u|$.

\begin{lemma}\label{lemma_a11}
For $n\geq2$, there exists some constant $\varepsilon_{2}\leq\varepsilon_{1}$,
such that for $0<\varepsilon<\varepsilon_{2}$, we have
\begin{align}\label{aii}
\frac{|\Sigma'|+\rho_{n}(\varepsilon)}{C\varepsilon}\leq-a_{ii}
\leq\frac{C\left(|\Sigma'|+\rho_{n}(\varepsilon)\right)}{\varepsilon},\quad i=1,2,
\end{align}
where $C$ is a {\it universal constant} but independent of $|\Sigma'|$.
\end{lemma}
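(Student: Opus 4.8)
The plan is to estimate the diagonal entries $a_{ii}=\int_{\partial D_i}\partial_\nu v_i$ by reducing everything to the energy of the approximating functions $\bar u_i$. For concreteness take $i=1$. Since $v_1$ is harmonic in $\Omega$ with $v_1=1$ on $\partial D_1$ and $v_1=0$ on $\partial D_2\cup\partial D$, integration by parts gives $-a_{11}=\int_{\partial D_1}\partial_\nu v_1\big|_+ = \int_\Omega |\nabla v_1|^2$, so the problem becomes a two-sided estimate for the Dirichlet energy of $v_1$. The upper bound follows by comparison: $v_1$ minimizes the Dirichlet integral among functions with its boundary data, and $\bar u_1$ is an admissible competitor, so $\int_\Omega|\nabla v_1|^2\le \int_\Omega|\nabla\bar u_1|^2$. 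For the lower bound I would use Proposition \ref{prop1}: writing $v_1=\bar u_1+(v_1-\bar u_1)$ with $\|\nabla(v_1-\bar u_1)\|_{L^\infty(\Omega)}\le C$, one gets $\int_\Omega|\nabla v_1|^2\ge \tfrac12\int_\Omega|\nabla\bar u_1|^2 - C^2|\Omega|$, and since the first term will turn out to dominate (being of order $(|\Sigma'|+\rho_n(\varepsilon))/\varepsilon\to\infty$), this yields a matching lower bound once $\varepsilon$ is small.

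The heart of the matter is therefore the explicit computation
$$\int_{\Omega_{R_1}}|\nabla\bar u_1|^2\,dx \;\approx\; \int_{|x'|<R_1}\frac{1}{\delta(x')}\,dx',$$
since $\partial_{x_n}\bar u_1=1/\delta(x')$ and, by the bounds recorded just before Proposition \ref{prop1}, the tangential derivatives contribute only $\int \delta(x')\cdot\frac{d^2(x')}{(\varepsilon+d^2(x'))^2}\,dx'$, which is $O(1)$ and hence negligible. Here $\delta(x')=\varepsilon+h_1(x')-h_2(x')$. The integral over the rest of $\Omega$ is universally bounded by the $C^2$ bound on $\bar u_1$. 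So everything comes down to estimating $\int_{|x'|<R_1}\frac{dx'}{\delta(x')}$ from above and below by $C^{\pm1}(|\Sigma'|+\rho_n(\varepsilon))/\varepsilon$.

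To evaluate $\int_{|x'|<R_1}\frac{dx'}{\delta(x')}$ I would split the region of integration into $\Sigma'$ and its complement. On $\Sigma'$ one has $h_1=h_2\equiv 0$ by \eqref{h1h2}, so $\delta\equiv\varepsilon$ there and the contribution is exactly $|\Sigma'|/\varepsilon$. On $B'_{R_1}\setminus\overline{\Sigma'}$, the key structural input is \eqref{h1---h2} and \eqref{h1----h2}: $\nabla_{x'}(h_1-h_2)$ vanishes on $\partial\Sigma'$ and $\nabla^2_{x'}(h_1-h_2)\ge \kappa_0 I_{n-1}$, so by Taylor expansion from the nearest point of $\partial\Sigma'$ one gets the two-sided comparison $\tfrac{\kappa_0}{2}d^2(x')\le (h_1-h_2)(x')\le C d^2(x')$ with $d(x')=\mathrm{dist}(x',\Sigma')$; thus $\varepsilon + \tfrac{\kappa_0}{2}d^2(x') \le \delta(x') \le \varepsilon + Cd^2(x')$ there. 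Hence
$$\int_{B'_{R_1}\setminus\overline{\Sigma'}}\frac{dx'}{\delta(x')}\;\asymp\;\int_{B'_{R_1}\setminus\overline{\Sigma'}}\frac{dx'}{\varepsilon+d^2(x')}.$$
Using the coarea formula relative to $\partial\Sigma'$ (so that the level set $\{d=t\}$ has $(n-1)$-dimensional boundary measure comparable to $|\partial\Sigma'| + C t^{n-2}$ — or simply $\asymp t^{n-2}$ plus a constant, since $\Sigma'$ is a fixed bounded convex body containing a ball), this reduces to the one-dimensional integrals $\int_0^{R_1}\frac{|\partial\Sigma'|+t^{n-2}}{\varepsilon+t^2}\,dt$, which evaluate to $O(1) + \int_0^{R_1}\frac{t^{n-2}}{\varepsilon+t^2}\,dt$. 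The last integral is the classical one giving $\rho_n(\varepsilon)/\varepsilon$ up to universal constants: $\asymp \varepsilon^{-1/2}$ for $n=2$, $\asymp |\ln\varepsilon|$ for $n=3$, and $\asymp 1$ for $n\ge4$ — i.e.\ exactly $\rho_n(\varepsilon)/\varepsilon$. Combining the two pieces gives $\int_{|x'|<R_1}\frac{dx'}{\delta(x')}\asymp \frac{|\Sigma'|+\rho_n(\varepsilon)}{\varepsilon}$, and feeding this back through the energy identity and comparison argument proves \eqref{aii}; the case $i=2$ is identical with $\bar u_2$ in place of $\bar u_1$.

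The main obstacle I anticipate is the bookkeeping in the lower bound: one must make sure that the $O(|\Omega|)$ error from $\|\nabla(v_1-\bar u_1)\|_{L^\infty}\le C$ and the $O(1)$ error from the tangential derivatives of $\bar u_1$ are genuinely absorbed by the leading term, which is only legitimate once $|\Sigma'|+\rho_n(\varepsilon)$ exceeds a universal multiple of $\varepsilon$; this is precisely why the threshold $\varepsilon_2$ (possibly smaller than $\varepsilon_1$) appears in the statement. A secondary technical point is the geometric comparison $(h_1-h_2)(x')\asymp d^2(x')$ near $\partial\Sigma'$, which requires \eqref{h1---h2}–\eqref{h1----h2} and a uniform-in-$x'$ control of the second-order Taylor remainder via \eqref{h1h3}; away from $\partial\Sigma'$ (where $d(x')$ is bounded below) it follows simply from \eqref{h1-h2} and compactness. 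Apart from these, the argument is a standard energy comparison plus an explicit integral.
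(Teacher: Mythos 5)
Your argument follows the paper's proof in all essentials: both start from the identity $-a_{11}=\int_\Omega|\nabla v_1|^2$, compare $v_1$ with $\bar u_1$ via Proposition \ref{prop1}, split the energy into the contributions of $\Sigma$ (giving $|\Sigma'|/\varepsilon$), of $\Omega_{R_1}\setminus\Sigma$ (reducing to $\int_{B'_{R_1}\setminus\Sigma'}\frac{dx'}{\varepsilon+d^{2}(x')}$), and of $\Omega\setminus\Omega_{R_1}$ (bounded), and then evaluate the remaining integral dimension by dimension; your use of the Dirichlet principle for the upper bound is only a cosmetic variant of the paper's pointwise two-sided bound \eqref{v1-bounded1}. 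The one inaccuracy is the claim that $\int_0^{R_1}\frac{|\partial\Sigma'|}{\varepsilon+t^{2}}\,dt=O(1)$: this term is actually of order $|\partial\Sigma'|/\sqrt{\varepsilon}\asymp R_0^{\,n-2}/\sqrt{\varepsilon}$, and one must invoke the Cauchy (Young) inequality together with $R_0^{\,n-1}\le C|\Sigma'|$ from the John-ellipsoid normalization \eqref{sigma} to absorb it into $\frac{|\Sigma'|}{\varepsilon}+\frac{\rho_n(\varepsilon)}{\varepsilon}$ --- precisely the step the paper carries out explicitly in the cases $n=3$ and $n\ge4$ --- so the final two-sided bound \eqref{aii} is unaffected.
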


\begin{remark}\label{rem aii}
Lemma \ref{lemma_a11} shows that

$(i)$ if $|\Sigma'|>0$, set $\varepsilon_{0}:=\min\{|\Sigma'|^{2},\varepsilon_{2}\}$, such that $\rho_{n}(\varepsilon)<|\Sigma'|$ if $0<\varepsilon<\varepsilon_{0}$, then for $0<\varepsilon<\varepsilon_{0}$,
\begin{equation}
\frac{|\Sigma'|}{C\varepsilon}\leq-a_{ii}\leq\frac{C|\Sigma'|}{\varepsilon},\quad i=1,2;
\end{equation}

$(ii)$ if $|\Sigma'|=0$ (that is, $\Sigma'=\{0'\}$), then we have
$$\frac{\rho_{n}(\varepsilon)}{C\varepsilon}\leq-a_{ii}\leq\frac{C\rho_{n}(\varepsilon)}{\varepsilon},$$
the same as Lemma 2.5--2.7 in \cite{bly1}, which leads the electric field to blow up, see the main results of \cite{bly1}.
\end{remark}

The proof of Lemma \ref{lemma_a11} is given in Subsection \ref{subsec3}. Thus, we are now in position to prove Theorem 1.1.

\begin{proof}[{\bf Proof of Theorem \ref{thm1}.}]
We need only to discuss the case that  $0<\varepsilon<\varepsilon_{0}$.
Since  $\|u\|_{H^{1}(\Omega)}\leq\,C$ (independent of $\varepsilon$), it follows from the trace embedding theorem that
\begin{equation}\label{C1C2}
|C_{1}|+|C_{2}|\leq\,C.
\end{equation}
By \eqref{system} and Lemma \ref{lemma aij}, we  have $a_{11}<0$, $a_{12}>0$, $a_{11}+a_{12}<0$, and $a_{11}a_{22}-a_{12}a_{21}>0$,
\begin{align}\label{C1-C1}
|C_{1}-C_{2}|=\frac{|b_{1}-\alpha b_{2}|}{|a_{11}-\alpha a_{12}|}=\frac{1}{|a_{11}|}\cdot\frac{|a_{11}||b_{1}-\alpha b_{2}|}{|a_{11}-\alpha a_{12}|},
\end{align}
where, by \eqref{a11+a12},
\begin{align}\label{alpha}
\frac{1}{C}\leq\alpha=\frac{a_{11}+a_{12}}{a_{21}+a_{22}}\leq C,
\end{align}
for some positive constant $C$.
Thus, we have
$$|a_{11}|<|a_{11}-\alpha a_{12}|<(1+\alpha)|a_{11}|\leq C|a_{11}|,$$
that is,
\begin{align}\label{a11 a22}
\frac{1}{C}\leq\frac{|a_{11}|}{|a_{11}-\alpha a_{12}|}
\leq C.
\end{align}
It follows from \eqref{b1-b2} and \eqref{alpha} that
\begin{align}\label{b1 2}
|b_1-\alpha b_2|\leq C\|\varphi\|_{C^2(\partial D)}.
\end{align}
By \eqref{C1-C1}, \eqref{a11 a22}, \eqref{b1 2} and Lemma \ref{lemma_a11}, if $|\Sigma'|>0$, we have
\begin{align}\label{c1--c2}
|C_1-C_2|\leq\frac{C\|\varphi\|_{C^2(\partial D)}}{|a_{11}|}\leq \frac{C\varepsilon}{|\Sigma'|+\rho_{n}(\varepsilon)}\|\varphi\|_{C^2(\partial D)},\quad\mathrm{for}\ 0<\varepsilon<\varepsilon_{0}.
\end{align}
Recalling \eqref{decomposition_u2}, using \eqref{v1-bounded1}-\eqref{nabla_v3}, \eqref{C1C2} and \eqref{c1--c2}, we obtain for $x\in\Omega_{R_{1}}$,
\begin{align*}
|\nabla{u}(x)|\leq&|C_{1}-C_{2}||\nabla{v}_{1}(x)|+|C_{2}||\nabla({v}_{1}+{v}_{2})(x)|+|\nabla{v}_{3}(x)|\\
\leq&\frac{C\varepsilon}{|\Sigma'|+\rho_{n}(\varepsilon)}\frac{1}{\varepsilon+d^{2}(x')}\|\varphi\|_{C^2(\partial D)}+C\|\varphi\|_{C^2(\partial D)},
\end{align*}
and \eqref{upperbound10}. The proof of Theorem \ref{thm1} is completed.
\end{proof}

\begin{remark}\label{rem2.5}
From the proof of Theorem \ref{thm1}, we can see that if $|\Sigma'|=0$ (that is, $\Sigma'=\{0'\}$), we have the pointwise upper bound estimates
$$|\nabla{u}(x',x_{n})|\leq \frac{C\varepsilon}{\rho_{n}(\varepsilon)}\frac{1}{\varepsilon+|x'|^{2}}||\varphi||_{C^{2}(\partial D)},\qquad\mbox{for}~x\in\Omega_{R_{1}}.
$$
Especially,
$$|\nabla{u}(0',x_{n})|\leq \frac{C}{\rho_{n}(\varepsilon)}||\varphi||_{C^{2}(\partial D)},\quad 0<x_{n}<\varepsilon.
$$
Actually, by \eqref{decomposition_u2}, \eqref{C1-C1}, and \eqref{a11 a22}, we have
\begin{align*}
|\nabla{u}(0',x_{n})|\geq&|C_{1}-C_{2}||\nabla{v}_{1}(0',x_{n})|-|C_{2}||\nabla({v}_{1}+{v}_{2})(0',x_{n})|-|\nabla{v}_{3}(0',x_{n})|\\
\geq&|b_{1}-\alpha b_{2}|\cdot\frac{|a_{11}|}{|a_{11}-\alpha a_{12}|}\frac{1}{|a_{11}|}|\nabla{v}_{1}(0',x_{n})|-C\\
\geq&|b_{1}-\alpha b_{2}|\cdot\frac{1}{C\rho_{n}(\varepsilon)}-C.
\end{align*}
Thus, if $|b_{1}-\alpha b_{2}|=constant\neq 0$, then the blowup must occur. Recalling the definition of $a_{ij}$ and $b_{i}$, we have
$$a_{11}+a_{12}=a_{11}+a_{21}=\int_{\partial D_{1}}\frac{\partial v_{1}}{\partial\nu}+\int_{\partial D_{2}}\frac{\partial v_{1}}{\partial\nu}=\int_{\partial D}\frac{\partial v_{1}}{\partial\nu},$$
and
$$a_{22}+a_{21}=a_{22}+a_{12}=\int_{\partial D_{2}}\frac{\partial v_{2}}{\partial\nu}+\int_{\partial D_{1}}\frac{\partial v_{2}}{\partial\nu}=\int_{\partial D}\frac{\partial v_{2}}{\partial\nu}.$$
So that, using \eqref{alpha},
\begin{align*}
(b_{1}-\alpha b_{2})(a_{22}+a_{21})=&\left((a_{22}+a_{21})b_{1}-(a_{11}+a_{12})b_{2}\right)\\
=&\left(\int_{\partial D}\frac{\partial v_{2}}{\partial\nu}\int_{\partial D_{1}}\frac{\partial v_{3}}{\partial\nu}-\int_{\partial D}\frac{\partial v_{1}}{\partial\nu}\int_{\partial D_{2}}\frac{\partial v_{3}}{\partial\nu}\right),
\end{align*}
which is the linear functional of $\varphi$, $Q_{\varepsilon}[\varphi]$, defined in \cite{bly1}. If there exists an $\varphi_{0}$ such that its limit functional $Q^{*}_{\varepsilon}[\varphi_{0}]\neq0$, then $Q_{\varepsilon}[\varphi_{0}]\neq0$ too, for sufficienty small $\varepsilon$. More details can be referred to Section 3 in \cite{bly1}.
\end{remark}

\subsection{Proof of Proposition \ref{prop1}}\label{subsec_prop1}

In order to show the role of $\Sigma$, we give a proof with some details and list the main difference, although the main idea is in spirit from \cite{bll, llby}. We emphasize that in this subsection the constant $C$ is independent of $|\Sigma'|$.
\begin{proof}
\noindent{\bf STEP 1.} Proof of \eqref{nabla_w_i0}. We prove it for $i=1$, and $i=2$ is the same. Denote
\begin{equation}\label{def_w}
w_{1}:=v_{1}-\bar{u}_{1}.
\end{equation}
By the definition of $v_{1}$, \eqref{equ_v1}, and using \eqref{def_w}, we have
\begin{equation}\label{w20}
\left\{
\begin{aligned}
-\Delta{w}_{1}&=\Delta\bar{u}_{1}\quad\mbox{in}~\Omega,\\
w_{1}&=0\quad\ \ \mbox{on}~\partial \Omega.
\end{aligned}\right.
\end{equation}
Since
\begin{equation}\label{estimate_baru}
|\bar{u}_{1}|+|\nabla\bar{u}_{1}|+|\nabla^{2}\bar{u}_{1}|\leq\,C,\quad\mbox{in}~~ \Omega\setminus\Omega_{R_1/2},
\end{equation}
by the standard elliptic theory, we know that
\begin{equation}\label{nabla_w_out}
|w_{1}|+\left|\nabla{w}_{1}\right|\leq\,C,
\quad\mbox{in}~~ \Omega\setminus\Omega_{R_{1}}.
\end{equation}
Therefore, in order to show \eqref{nabla_w_i0}, we only need to prove
\begin{equation}\label{section2 energyw1}
\left\|\nabla{w}_{1}\right\|_{L^{\infty}(\Omega_{R_{1}})}\leq\,C,
\end{equation}
We divide into three steps to prove \eqref{section2 energyw1}.

{\bf STEP 1.1.} Proof of boundedness of the energy of $w_{1}$ in $\Omega$, that is,
\begin{equation}\label{1energy_w}
\int_{\Omega}\left|\nabla{w}_{1}\right|^{2}\leq\,C.
\end{equation}

Using the maximum principle, we have $0<v_{1}<1$ in $\Omega$, so that
\begin{equation}\label{w_bdd}
\|w_{1}\|_{L^{\infty}(\Omega)}\leq\,C.
\end{equation}
By a direct computation,
\begin{equation}\label{nabla2u_bar}
\Delta\bar{u}_{1}(x)=0,\quad x\in\Sigma,\quad |\Delta\bar{u}_{1}(x)|\leq\frac{C}{\varepsilon+d^{2}(x')},\quad\,x\in\Omega_{R_{1}}\setminus\Sigma.
\end{equation}
Multiplying the equation in \eqref{w20} by $w_{1}$ and integrating by parts, it follows from \eqref{estimate_baru} and \eqref{nabla2u_bar} that
\begin{align*}
\int_{\Omega}|\nabla{w}_{1}|^{2}
=\int_{\Omega}w_{1}\left(\Delta\bar{u}_{1}\right)
\leq\,\|w_{1}\|_{L^{\infty}(\Omega)}\left(\int_{\Omega_{R_{1}}\setminus\Sigma}|\Delta\bar{u}_{1}|+C\right)
\leq\,C.
\end{align*}
So \eqref{1energy_w} is proved.

{\bf STEP 1.2.} Proof of
\begin{equation}\label{energy_w_inomega_z1}
\int_{\widehat{\Omega}_{\delta}(z')}\left|\nabla{w}_{1}\right|^{2}dx\leq
C\delta^{n},
\end{equation}
where $\delta=\delta(z')=\varepsilon+h_{1}(z')-h_{2}(z')$, and
\begin{equation*}
\widehat{\Omega}_{t}(z'):=\left\{x\in \mathbb{R}^{n}~\big|~h_{2}(x')<x_{n}<\varepsilon+h_{1}(x'),~|x'-z'|<{t}\right\}.
\end{equation*}

The following iteration scheme we used is similar in spirit to that used in  \cite{bll,llby}. For $0<t<s<R_{1}$, let $\eta$ be a smooth cutoff function satisfying $\eta(x')=1$ if $|x'-z'|<t$, $\eta(x')=0$ if $|x'-z'|>s$, $0\leq\eta(x')\leq1$ if $t\leq|x'-z'|\leq\,s$, and $|\nabla_{x'}\eta(x')|\leq\frac{2}{s-t}$. Multiplying the equation in \eqref{w20} by $w_{1}\eta^{2}$ and integrating by parts
leads  to the Caccioppolli-type inequality
\begin{align}\label{FsFt11}
\int_{\widehat{\Omega}_{t}(z')}|\nabla{w}_{1}|^{2}\leq\,\frac{C}{(s-t)^{2}}\int_{\widehat{\Omega}_{s}(z')}|w_{1}|^{2}
+(s-t)^{2}\int_{\widehat{\Omega}_{s}(z')}\left|\Delta\bar{u}_{1}\right|^{2}.
\end{align}
We further divide into three cases to derive the iteration formula by using \eqref{FsFt11}.

{\bf Case 1.} For $z'\in\Sigma'_{-\sqrt{\varepsilon}}:=\{x'\in\Sigma'~|~ d(x',\partial\Sigma')>\sqrt{\varepsilon}\}$ and $0<s<\sqrt{\varepsilon}$, where $\delta(z')=\varepsilon$. We here assume that $B'_{\sqrt{\varepsilon}}\subset\Sigma'$ (otherwise, start from Case 2), then
\begin{align}\label{int_w1}
\int_{\widehat{\Omega}_{s}(z')}|w_{1}|^{2}&=\int_{|x'-z'|<s}\int_{0}^{\varepsilon}\left(\int_{0}^{x_{n}}\partial_{ x_{n}}w_{1}dx_{n}\right)^{2}dx_{n}dx'\nonumber\\
&\leq\int_{|x'-z'|<s}\varepsilon^{2}\int_{0}^{\varepsilon}|\partial_{x_{n}}w_{1}|^{2}dx_{n}dx'\nonumber\\
&\leq C\varepsilon^{2}\int_{\widehat{\Omega}_{s}(z')}|\nabla{w}_{1}|^{2}.
\end{align}
Denote
$$F(t):=\int_{\widehat{\Omega}_{t}(z')}|\nabla{w}_{1}|^{2}.$$
It follows from \eqref{nabla2u_bar}, \eqref{FsFt11} and \eqref{int_w1} that
\begin{equation}\label{energy_w1}
F(t)\leq\left(\frac{c_{1}\varepsilon}{s-t}\right)^{2}F(s),
\end{equation}
where $c_{1}$ is a {\it universal constant} but independent of $|\Sigma'|$.

Let $k=\left[\frac{1}{4c_{1}\sqrt{\varepsilon}}\right]$ and $t_{i}=\delta+2c_{1}i\varepsilon, i=0,1,2,\cdots,k$. Then by \eqref{energy_w1} with $s=t_{i+1}$ and $t=t_{i}$, we have
$$F(t_{i})\leq\frac{1}{4}F(t_{i+1}).$$
After $k$ iterations, using \eqref{1energy_w}, we have
$$F(t_{0})\leq(\frac{1}{4})^{k}F(t_{k})\leq C\varepsilon^{n}.$$
Therefore, for some sufficiently small  $\varepsilon>0$,
\begin{equation*}
\int_{\widehat{\Omega}_{\delta}(z')}\left|\nabla{w}_{1}\right|^{2}dx\leq C\varepsilon^{n}.
\end{equation*}

{\bf Case 2.} For $z'\in\Sigma'_{\sqrt{\varepsilon}}\setminus\Sigma'_{-\sqrt{\varepsilon}}$, where $\Sigma'_{\sqrt{\varepsilon}}:=\{x'\in B'_{R_{1}}~| ~dist(x', \Sigma')<\sqrt{\varepsilon}\}$ and $0<s<\sqrt{\varepsilon}$, we have $\varepsilon\leq\delta(z')\leq C\varepsilon$. Using \eqref{nabla2u_bar}, we have

\begin{equation}\label{integal_Lubar11_in}
\int_{\widehat{\Omega}_{s}(z')}
\left|\Delta\bar{u}_{1}\right|^{2}
\leq\frac{Cs^{n-1}}{\varepsilon}.
\end{equation}
Note that
\begin{align}\label{energy_w_square_in}
\int_{\widehat{\Omega}_{s}(z')}|w_{1}|^{2}
\leq\,C\varepsilon^{2}\int_{\widehat{\Omega}_{s}(z')}|\nabla{w}_{1}|^{2}.
\end{align}
It follows from \eqref{FsFt11}, \eqref{integal_Lubar11_in} and
\eqref{energy_w_square_in} that
\begin{equation}\label{tildeF111_in}
F(t)\leq\,\left(\frac{c_{2}\varepsilon}{s-t}\right)^{2}F(s)+C(s-t)^{2}\frac{s^{n-1}}{\varepsilon},
\quad\forall~0<t<s<\sqrt{\varepsilon},
\end{equation}
where $c_2$ is another {\it universal constant} but independent of $|\Sigma'|$.

Let $k=\left[\frac{1}{4c_{2}\sqrt{\varepsilon}}\right]$ and $t_{i}=\delta+2c_{2}i\varepsilon$, $i=0,1,2,\cdots,k$. Then
by \eqref{tildeF111_in} with $s=t_{i+1}$ and $t=t_{i}$, we have
$$F(t_{i})\leq\,\frac{1}{4}F(t_{i+1})
+C(i+1)^{n-1}\varepsilon^{n}.$$
After $k$ iterations,
using (\ref{1energy_w}), we have
\begin{eqnarray*}
F(t_{0})
\leq (\frac{1}{4})^{k}F(t_{k})
+C\varepsilon^{n}\sum_{i=0}^{k-1}(\frac{1}{4})^{i}(i+1)^{n-1}
\leq\,C\varepsilon^{n}.
\end{eqnarray*}
Therefore, for some sufficiently small   $\varepsilon>0$, we have
$$\int_{\widehat{\Omega}_{\delta}(z')}|\nabla{w}_{1}|^{2}\leq\,C\varepsilon^{n}.$$

{\bf Case 3.} For $z'\in B'_{R_{1}}\setminus\Sigma'_{\sqrt{\varepsilon}}$, and $0<s<\frac{2}{3}d(z')$, we have $Cd^{2}(z')\leq\delta(z')\leq(C+1)d^{2}(z')$.
Estimate \eqref{integal_Lubar11_in} and \eqref{energy_w_square_in} become
\begin{equation*}\label{integal_Lubar11}
\int_{\widehat{\Omega}_{s}(z')}\left|\Delta\bar{u}_{1}\right|^{2}\leq\frac{Cs^{n-1}}{d^{2}(z')}.
\end{equation*}
and
\begin{align*}\label{energy_w_square}
\int_{\widehat{\Omega}_{s}(z')}|w_{1}|^{2}
\leq&\,Cd^{4}(z')\int_{\widehat{\Omega}_{s}(z')}|\nabla{w}_{1}|^{2},
\end{align*} respectively. Furthermore, in view of \eqref{FsFt11},
estimate \eqref{tildeF111_in} becomes
\begin{equation}\label{tildeF111}
F(t)\leq\,\left(\frac{c_{3}d^{2}(z')}{s-t}\right)^{2}F(s)+C(s-t)^{2}\frac{s^{n-1}}{d^{2}(z')},
\quad\forall~0<t<s<\frac{2}{3}d(z'),
\end{equation}
where $c_3$ is another {\it universal constant} but independent of $|\Sigma'|$.

Let $k=\left[\frac{1}{4c_{3}d(z')}\right]$ and $t_{i}=\delta+2c_{3}i\,d^{2}(z')$, $i=0,1,2,\cdots,k$. Then applying \eqref{tildeF111} with $s=t_{i+1}$ and $t=t_{i}$, we have
$$F(t_{i})\leq\,\frac{1}{4}F(t_{i+1})+C(i+1)^{n-1}d^{2n}(z'),
$$
After $k$ iterations, using (\ref{1energy_w}), we have for some sufficiently small   $\varepsilon>0$,
\begin{eqnarray*}
F(t_{0}) \leq (\frac{1}{4})^{k}F(t_{k})+Cd^{2n}(z')\sum_{i=0}^{k-1}(\frac{1}{4})^{i}(i+1)^{n-1}
\leq Cd^{2n}(z').
\end{eqnarray*}
This implies that
$$\int_{\widehat{\Omega}_{\delta}(z')}|\nabla{w}_{1}|^{2}\leq\,Cd^{2n}(z').$$
Therefore, \eqref{energy_w_inomega_z1} is proved.

{\bf STEP 1.3.}  Rescaling and $L^{\infty}$ estimates. Making a change of variables
\begin{equation*}
 \left\{
  \begin{aligned}
  &x'-z'=\delta y',\\
  &x_n=\delta y_n,
  \end{aligned}
 \right.
\end{equation*}
then $\widehat{\Omega}_{\delta}(z')$ becomes $Q_{1}$ of nearly unit size, where
$$Q_{r}=\left\{y\in\mathbb{R}^{n}~\Big|~\frac{1}{\delta}h_{2}(\delta{y}'+z')<y_{n}
<\frac{\varepsilon}{\delta}+\frac{1}{\delta}h_{1}(\delta{y}'+z'),~|y'|<r\right\},\quad\mbox{for}~~r\leq1,$$ and the top and
bottom boundaries
become
$$
y_n=\hat{h}_{1}(y'):=\frac{1}{\delta}
\left(\varepsilon+h_{1}(\delta\,y'+z')\right),\quad|y'|<1,$$
and
$$y_n=\hat{h}_{2}(y'):=\frac{1}{\delta}h_{2}(\delta\,y'+z'), \quad |y'|<1.
$$
By the standard bootstrap argument of $W^{2,p}$ estimates for elliptic equations in unit size domain, the same as in the step 1.3 of \cite{LX}, we obtain
\begin{equation}
\left\|\nabla{w_{1}}\right\|_{L^{\infty}(\widehat{\Omega}_{\delta/2}(z'))}\leq\,
\frac{C}{\delta}\left(\delta^{1-\frac{n}{2}}\left\|\nabla{w_{1}}\right\|_{L^{2}(\widehat{\Omega}_{\delta}(z'))}
+\delta^{2}\left\|\Delta\bar{u}_{1}\right\|_{L^{\infty}(\widehat{\Omega}_{\delta}(z'))}\right).
\label{AAA}
\end{equation}

{\bf Case 1.} For $z'\in\Sigma'_{-\sqrt{\varepsilon}}$. In this case, $\Delta\bar{u}_{1}=0$. It follows from \eqref{AAA} and \eqref{energy_w_inomega_z1} that
$$\left|\nabla{w}_{1}(z',z_{n})\right|\leq\frac{C\delta^{1-\frac{n}{2}}\delta^{\frac{n}{2}}}{\delta}
=\,C,
\qquad\forall \ 0<z_{n}<\varepsilon.$$

{\bf Case 2.} For $z'\in\Sigma'_{\sqrt{\varepsilon}}\setminus\Sigma'_{-\sqrt{\varepsilon}}$.
Using \eqref{nabla2u_bar} and $\varepsilon\leq\delta(z')\leq C\varepsilon$,
$$\delta^{2}\left|\Delta\bar{u}_{1}\right|\leq
\frac{C \delta^{2}}{\varepsilon}\leq\,C
\delta, \qquad
\mbox{in}\
\widehat \Omega_\delta(z').$$
It follows from (\ref{AAA}) and (\ref{energy_w_inomega_z1}) that
$$\left|\nabla{w}_{1}(z',z_{n})\right|\leq\frac{C(\delta^{1-\frac{n}{2}}\delta^{\frac{n}{2}}+\delta)}{\delta}
\leq\,C,
\qquad\forall \ h_{2}(z')<z_{n}<\varepsilon+h_1(z').$$

{\bf Case 3.} For $z'\in B'_{R_{1}}\setminus\Sigma'_{\sqrt{\varepsilon}}$.
Using \eqref{nabla2u_bar} and $Cd^{2}(z')\leq\delta(z')\leq(C+1)d^{2}(z')$,
$$\delta^{2}\left|\Delta\bar{u}_{1}\right|\leq\frac{C\delta^{2}}{d^{2}(z')}\leq\,C\delta,
\qquad \mbox{in}\
\widehat \Omega_\delta(z').$$
We deduce from (\ref{AAA}) and (\ref{energy_w_inomega_z1}) that
$$\left|\nabla{w}_{1}(z',z_{n})\right|\leq\frac{C\Big(\delta^{1-\frac{n}{2}}\delta^{\frac{n}{2}}+\delta\Big)}{\delta}\leq\,C,
\qquad\forall \
h_{2}(z')<z_{n}<\varepsilon+h_1(z').$$
Estimate \eqref{nabla_w_i0} is established.

\noindent{\bf STEP 2.} Proof of \eqref{v1+v2_bounded1} and \eqref{nabla_v3}.
Recalling the definitions of $v_{1}$ and $v_{2}$, \eqref{equ_v1} and \eqref{equ_v2}, we have
\begin{align*}
\begin{cases}
\Delta(v_{1}+v_{2}-1)=0,&\mbox{in}~\Omega,\\
v_{1}+v_{2}-1=0,&\mbox{on}~\partial{D}_{i},~i=1,2,\\
v_{1}+v_{2}-1=-1,&\mbox{on}~\partial{D}.
\end{cases}
\end{align*}
Therefore, the result of \cite{llby} shows that
$$|\nabla (v_{1}+ v_{2})|\leq C\quad \hbox{in}\ \Omega.$$
By the same reason, we have
$$||\nabla v_{3}||_{L^{\infty}(\Omega)}\leq C||\varphi||_{L^{\infty}(\partial D)}.$$
The proof of Propostion \ref{prop1} is finished.
\end{proof}

\subsection{Proof of Lemma \ref{lemma_a11}}\label{subsec3}
In order to prove Lemma \ref{lemma_a11}, we need the following well-known property for bounded convex domains, see e.g. Theorem 1.8.2 in \cite{gu}.

\begin{lemma}
If $D\subset\mathbb{R}^{n}$ is a bounded convex set with nonempty interior and $E$ is the ellipsoid of minimum volume containing $D$ center at the center of mass of $D$, then
$$n^{-3/2}E\subset D\subset E,$$
where $aE$ denotes the $a$-dilation of $E$ with respect to its center.
\end{lemma}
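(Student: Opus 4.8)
The plan is to normalize the configuration and then combine the first-order optimality (``John'') condition for the extremal ellipsoid with the classical fact that the centroid of a convex body cannot lie too close to its boundary. Throughout, ``$E$'' means the minimum-volume ellipsoid \emph{centered at the centroid of $D$} among those containing $D$. First I would translate $D$ so that its centroid is the origin, and then apply an invertible linear map (followed by a scaling): since linear maps preserve centroids, send origin-centered ellipsoids to origin-centered ellipsoids, and scale all volumes by a common factor, I may assume that $E$ is the unit ball $B=B_{1}(0)$. Existence of $E$ follows from compactness --- writing $E=\{x:\langle Ax,x\rangle\le1\}$ with $A\succ0$, the feasible set $\{A\succ0:\langle Ax,x\rangle\le1\ \forall x\in D\}$ is convex and, because $0$ lies in the interior of $D$, bounded --- and uniqueness from the strict concavity of $A\mapsto\log\det A$. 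After this reduction the inclusion $D\subset E=B$ is just the defining containment, so the entire content of the lemma is the lower inclusion $B_{n^{-3/2}}(0)\subset D$.

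Next I would extract the contact condition. Minimizing $\operatorname{vol}(\{x:\langle Ax,x\rangle\le1\})=\omega_{n}(\det A)^{-1/2}$ over the above convex set is the same as maximizing the concave function $\log\det A$, and the Lagrange / normal-cone optimality conditions for this program say that $A^{-1}$ lies in the cone generated by $\{pp^{\top}\}$ over the contact points $p\in\partial D\cap\partial E$. In the normalized coordinates $A=I_{n}$, so there exist unit vectors $p_{1},\dots,p_{m}\in\partial D\cap\partial B$ and weights $c_{i}>0$ with
\[
\sum_{i=1}^{m}c_{i}\,p_{i}p_{i}^{\top}=I_{n},\qquad\text{hence}\qquad\sum_{i=1}^{m}c_{i}=n
\]
(the last identity by taking traces). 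Thus the points where $\partial D$ touches the unit sphere are ``spread out'', and this will force $D$ to be fat near the origin.

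Then I would invoke the centroid. Fix a unit vector $v$ and set $d:=h_{D}(v)=\sup_{x\in D}\langle v,x\rangle$, the distance from the origin to the supporting hyperplane of $D$ with outer normal $v$. Because the origin is the centroid of $D$, the classical estimate that the centroid divides $D$ in any direction in ratio at most $n:1$ --- equivalently $h_{D}(-v)\le n\,h_{D}(v)$, which one gets from Brunn's theorem that the $(n-1)$-st root of the parallel-section volume of $D$ is concave, the extremal body being a cone --- gives $\inf_{x\in D}\langle v,x\rangle=-h_{D}(-v)\ge-nd$. Every contact point $p_{i}$ belongs to $D$, so $-nd\le\langle v,p_{i}\rangle\le d$, whence $|\langle v,p_{i}\rangle|\le nd$. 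Plugging $v$ into the contact identity,
\[
1=|v|^{2}=\Big\langle v,\Big(\sum_{i}c_{i}p_{i}p_{i}^{\top}\Big)v\Big\rangle=\sum_{i}c_{i}\langle v,p_{i}\rangle^{2}\le(nd)^{2}\sum_{i}c_{i}=n^{3}d^{2},
\]
so $h_{D}(v)=d\ge n^{-3/2}$. As $v$ was an arbitrary unit vector and $D=\bigcap_{|v|=1}\{x:\langle v,x\rangle\le h_{D}(v)\}$, we conclude $B_{n^{-3/2}}(0)\subset D$, i.e. $n^{-3/2}E\subset D\subset E$, and undoing the affine normalization proves the lemma.

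The two genuinely nontrivial ingredients are (i) the contact identity $\sum_{i}c_{i}p_{i}p_{i}^{\top}=I_{n}$ for the \emph{center-constrained} minimal circumscribed ellipsoid, and (ii) the centroid inequality $h_{D}(-v)\le n\,h_{D}(v)$; the rest is bookkeeping under the affine normalization. Both are classical pieces of convex geometry: (i) is John's theorem in its ``prescribed-center'' form and is obtained by reading off the KKT conditions of the concave program $\max\{\log\det A:\langle Ax,x\rangle\le1\ \forall x\in D\}$, and (ii) follows from the Brunn--Minkowski concavity of the slice-volume profile, with the linear profile (a cone) as equality case. I expect step (ii) to be the subtler point, and it is exactly where the hypothesis that $E$ is centered at the centroid --- rather than at its own John center --- enters: it is what prevents $D$ from being arbitrarily thin in the direction of the contact points, and it is also the reason the constant is the non-optimal $n^{-3/2}$ rather than $n^{-1}$.
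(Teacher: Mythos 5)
Your proof is correct. The paper itself gives no argument for this lemma --- it simply cites Theorem 1.8.2 of Guti\'errez's book --- and what you have written is precisely the standard proof of that result: after normalizing $E$ to the unit ball, the fixed-center John (KKT) condition yields $\sum_i c_i p_ip_i^{\top}=I_n$ with $\sum_i c_i=n$ (note that the companion condition $\sum_i c_ip_i=0$ of the unconstrained John theorem is correctly absent here, since the center is prescribed), and the Minkowski--Radon centroid bound $h_D(-v)\le n\,h_D(v)$ from Brunn's concavity theorem gives $|\langle v,p_i\rangle|\le n\,h_D(v)$, so that $1=\sum_i c_i\langle v,p_i\rangle^2\le n^3h_D(v)^2$ and hence $h_D(v)\ge n^{-3/2}$ for every unit $v$. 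All steps, including the existence/uniqueness of the center-constrained minimal ellipsoid and the passage from the support-function bound back to the inclusion $B_{n^{-3/2}}\subset D$, check out.
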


Thus, for bounded convex $(n-1)$-dimensional domain $\Sigma'$, there exists a $E'$ such that
$$(n-1)^{-3/2}E'\subset \Sigma'\subset E'.$$
Denote the length of the longest principal semi-axis as $R_{0}$ and the length of the shortest principal semi-axis as $\widetilde{R}_{0}>0$. In order to show the key role of $|\Sigma'|$ in the blowup analysis of $|\nabla u|$, for simplicity, we suppose that $\frac{R_{0}}{\widetilde{R}_{0}}\geq a$ for some constant $a>0$. Set $r_{0}=(n-1)^{-3/2}\widetilde{R}_{0}$. Obviously, $B'_{r_{0}}\subset \Sigma'\subset E'\subset B'_{R_{0}}$. Then, there exists a constant $C$, depending only on $n$ and $a$, such that
\begin{equation}\label{sigma}
|B'_{R_{0}}|\leq C|\Sigma'|.
\end{equation}

\begin{proof}[Proof of Lemma \ref{lemma_a11}]
We here estimate $a_{11}$ for instance, since $a_{22}$ is the same. Recalling the definition of $a_{11}$, \eqref{def_a_ij}, and using Green's formula, we have
$$a_{11}=\int_{\partial D_{1}}\frac{\partial v_{1}}{\partial\nu}=\int_{\partial D_{1}}\frac{\partial v_{1}}{\partial\nu}v_{1}=-\int_{\Omega}|\nabla v_{1}|^{2}.$$
We decompose it into three parts,
\begin{align}\label{a11'}
-a_{11}=\int_{\Sigma}|\nabla v_{1}|^{2}+\int_{\Omega_{R_1}\setminus\Sigma}|\nabla v_{1}|^{2}+\int_{\Omega\setminus \Omega_{R_1}}|\nabla v_{1}|^{2}.
\end{align}
For the first term, by (\ref{v1-bounded1}), we have
\begin{align}\label{first_term}
\frac{|\Sigma'|}{C\varepsilon}\leq\int_{\Sigma'}\int_{0}^{\varepsilon}\frac{1}{C\varepsilon^2}dx_ndx'\leq\int_{\Sigma}|\nabla v_{1}|^{2}\leq\int_{\Sigma'}\int_{0}^{\varepsilon}\frac{C}{\varepsilon^2}dx_ndx'\leq\frac{C|\Sigma'|}{\varepsilon}.
\end{align}
For the last term of \eqref{a11'}, it is easy to see from (\ref{v1--bounded1}) that
\begin{align}\label{R3}
\int_{\Omega\setminus \Omega_{R_1}}|\nabla v_{1}|^{2}\leq C.
\end{align}

For the middle term of \eqref{a11'}, it is complicated a little bit. First, in view of (\ref{v1-bounded1}) again, we have
\begin{align*}
&\int_{B'_{R_{1}}\setminus\Sigma'}
\int_{h_2(x')}^{\varepsilon+h_1(x')}\frac{1}{C(\varepsilon+d^2(x'))^{2}}dx_ndx'\\
&\leq
 \int_{\Omega_{R_1}\setminus\Sigma}|\nabla v_{1}|^{2}\\
 &\leq\int_{B'_{R_{1}}\setminus\Sigma'}
\int_{h_2(x')}^{\varepsilon+h_1(x')}\frac{C}{(\varepsilon+d^2(x'))^2}dx_ndx',
\end{align*}
which implies that
\begin{align}\label{a11_mid}
\int_{B'_{R_{1}}\setminus\Sigma'}\frac{dx'}{C(\varepsilon+d^2(x'))}\leq
\int_{\Omega_{R_1}\setminus\Sigma}|\nabla v_{1}|^{2}\leq\int_{B'_{R_{1}}\setminus\Sigma'}\frac{Cdx'}{\varepsilon+d^2(x')}.
\end{align}
We divide into three cases by dimension to estimate \eqref{a11_mid} in the following.

If $n=2$, then $\Sigma'=(-R_{0},R_{0})$, and $d(x')=|x'|-R_{0}$. We can choose some constant $\tilde{\varepsilon}\in(0,1)$ depending only on $R_{1}$, such that for $0<\varepsilon<\tilde{\varepsilon}$,
\begin{align}\label{n=2}
\int_{R_0}^{R_1}\frac{dr}{C\left(\varepsilon+(r-R_0)^2\right)}=\frac{1}{C}\int_{0}^{R_1-R_0}\frac{dr}{\varepsilon+r^2}=\frac{1}{C\sqrt{\varepsilon}}\arctan\frac{R_{1}-R_{0}}{\sqrt{\varepsilon}}.
\end{align}
Inserting \eqref{R3}--\eqref{n=2} to \eqref{a11'}, we have, for sufficiently small $\varepsilon$ (say, at least less than $(R_{1}-R_{0})^{2}$),
\begin{align*}
\frac{1}{C}\left(\frac{|\Sigma'|}{\varepsilon}+\frac{1}{\sqrt{\varepsilon}}\right)\leq -a_{11}\leq C\left(\frac{|\Sigma'|}{\varepsilon}+\frac{1}{\sqrt{\varepsilon}}\right),
\end{align*}
which implies that \eqref{aii} holds for $n=2$.

If $n=3$, notice that \eqref{sigma}, then choosing some constant $\tilde{\varepsilon}_1\in(0,1/e)$, such that for $0<\varepsilon<\tilde{\varepsilon}_1$, we have
\begin{align*}
\int_{B'_{R_{1}}\setminus\Sigma'}\frac{dx'}{\varepsilon+d^2(x')}&\leq\int_{B'_{R_{1}}\setminus B'_{r_{0}}}\frac{dx'}{\varepsilon+dist^{2}(x',B'_{R_{0}})}\\
&\leq\int_{r_{0}}^{R_{0}}\frac{Cr}{\varepsilon}dr+\int_{R_{0}}^{R_{1}}\frac{Cr}{\varepsilon+(r-R_{0})^{2}}dr\\
&\leq\frac{C(R_0^2-r_{0}^{2})}{\varepsilon}+C\int_{R_{0}}^{R_{1}}\frac{r-R_{0}}{\varepsilon+(r-R_{0})^{2}}dr
+C\int_{R_{0}}^{R_{1}}\frac{R_{0}}{\varepsilon+(r-R_{0})^{2}}dr\\
&\leq C\left(\frac{R_0^2}{\varepsilon}+|\ln\varepsilon|+\frac{R_0}{\sqrt{\varepsilon}}\right)\\
&\leq C\left(|\ln\varepsilon|+\frac{|\Sigma'|}{\varepsilon}\right),
\end{align*}where the Cauchy inequality has been used in the last inequality.

On the other hand, we pick a point $p\in\partial\Sigma'$, take a quadrant $Q$ with $p$ as the vertex, $(R_{1}-R_{0})/2$ as the radius, and symmetric with the normal of $p$, denoted $N_{p}$. Then, in the polar coordinates $\{p; r, \theta\}$ with $p$ as the center, for $x'\in Q$, we have $x'=p+(r\cos\theta, r\sin\theta)$ and $dist(x',\Sigma')\leq dist(x',p)$. There exists some small positive constant $\tilde{\varepsilon}\in(0,\tilde{\varepsilon}_1)$, depending only on $R_{1}$, if $0<\varepsilon<\tilde{\varepsilon}$, we have
\begin{align*}
\int_{B'_{R_{1}}\setminus\Sigma'}\frac{dx'}{\varepsilon+d^2(x')}&\geq\int_{Q}\frac{dx'}{\varepsilon+dist^{2}(x',p)}\\
&=\int_{-\frac{\pi}{4}}^{\frac{\pi}{4}}\int_{0}^{\frac{R_{1}-R_{0}}{2}}\frac{rdr}{\varepsilon+r^{2}}\\
&\geq\frac{1}{C}|\ln\varepsilon|.
\end{align*}
Substituting these two estimates above, together with \eqref{first_term} and \eqref{R3}, into \eqref{a11'}, we have \eqref{aii} for $n=3$.

If $n\geq4$, similarly by using \eqref{sigma}, we have
\begin{align*}
\int_{B'_{R_{1}}\setminus\Sigma'}\frac{dx'}{\varepsilon+d^2(x')}&\leq
\int_{r_{0}}^{R_{0}}\frac{Cr^{n-2}}{\varepsilon}dr+\int_{R_{0}}^{R_{1}}\frac{Cr^{n-2}}{\varepsilon+(r-R_{0})^{2}}dr\\
&\leq\frac{C(R_0^{n-1}-r_{0}^{n-1})}{\varepsilon}+C\int_{0}^{R_1-R_0}\frac{(t+R_0)^{n-2}}{\varepsilon+t^2}dt\\
&\leq \frac{CR_0^{n-1}}{\varepsilon}+CR_0^{n-2}\int_{0}^{R_1-R_0}\frac{1}{\varepsilon+t^2}dt+C\int_{0}^{R_1-R_0}\frac{{t^{n-2}}}{\varepsilon+t^2}dt\\
&\leq C\left(\frac{R_0^{n-1}}{\varepsilon}+\frac{R_0^{n-2}}{\sqrt{\varepsilon}}+\int_{0}^{R_1-R_0}\frac{{t^{2}}}{\varepsilon+t^2}t^{n-4}dt\right),\\
&\leq C\left(\frac{|\Sigma'|}{\varepsilon}+1\right).
\end{align*}
For any $p\in\partial\Sigma'$, we also can construct a cone $Q\subset B'_{R_{1}}\setminus\Sigma'$ with $p$ as the vertex,  such that $dist(x',\Sigma')\leq dist(x',p)$ whenever $x'\in Q$. Then for sufficiently small $\varepsilon$,
\begin{align*}
\int_{B'_{R_{1}}\setminus\Sigma'}\frac{dx'}{\varepsilon+d^2(x')}&\geq\int_{Q}\frac{dx'}{\varepsilon+dist^{2}(x',p)}\\
&\geq\frac{1}{C}\int_{0}^{\frac{R_{1}-R_{0}}{2}}\frac{r^{n-2}}{\varepsilon+r^{2}}dr\\
&\geq\frac{1}{C}.
\end{align*}
Then, we have \eqref{aii} for $n\geq4$. The proof of Lemma \ref{lemma_a11} is completed.
\end{proof}

\subsection{More general $D_{1}$ and $D_{2}$}\label{general case}

We consider a somewhat more general setting: We assume that the domain $D_{1}^{0}$ and $D_{2}^{0}$ are convex  outside $\Sigma$ and with growth order $m$, $m\geq2$. Precisely, for $x'\in B'_{R_{1}}\setminus\overline{\Sigma'}$,
\begin{equation*}
\lambda_{0}d^{m}(x')\leq\,h_{1}(x')-h_{2}(x')\leq\lambda_{1}d^{m}(x'),
\end{equation*}
and
\begin{equation*}
|\nabla_{x'}h_{i}(x')|\leq\,Cd^{m-1}(x'),~|\nabla_{x'}^{2}h_{i}(x')|\leq\,Cd^{m-2}(x'),\quad i=1,2
\end{equation*}
for some $\varepsilon$-independent constants $0<\lambda_{0}<\lambda_{1}$. Clearly,
$$\frac{1}{C}(\varepsilon+d^{m}(x'))\leq\delta(x')\leq\,C(\varepsilon+d^{m}(x')).$$
Denote
\begin{align*}
\rho_n^m(\varepsilon)=\begin{cases}\varepsilon^{\frac{n-1}{m}},& m>n-1,\\
\varepsilon|\ln\varepsilon|,&m=n-1,\\
\varepsilon,&m<n-1.
\end{cases}
\end{align*}
By the iteration process, Proposition \ref{prop1} for estimates of $|\nabla v_{i}|$, $i=1,2$, also hold except replacing \eqref{v1-bounded1} by
\begin{equation}\label{v1-bounded1'}
\frac{1}{C\left(\varepsilon+d^{m}(x')\right)}\leq|\nabla v_{i}(x)|\leq\,\frac{C}{\varepsilon+d^{m}(x')},\qquad\,x\in\Omega_{R_{1}}\setminus\Sigma,~i=1,2.
\end{equation}
For readers' convenience, we give the proof of the estimates of $a_{ii}$ $(i=1,2)$ in this general setting.
\begin{lemma}\label{lemma2.6}
For $n\geq2$, and $m\geq2$, there exists some constant $\varepsilon^*>0$, such that, for $0<\varepsilon<\varepsilon^*$, we have
\begin{align}\label{a_11m}
\frac{1}{C\varepsilon}\left(|\Sigma'|+\rho_{n}^{m}(\varepsilon)\right)\leq-a_{ii}\leq \frac{C}{\varepsilon}\left(|\Sigma'|+\rho_{n}^{m}(\varepsilon)\right),\quad i=1,2.
\end{align}
In particular, if $|\Sigma'|>0$, then for $0<\varepsilon<\varepsilon^*_0:=\min\{(\rho_n^m)^{-1}(|\Sigma'|),~\varepsilon^*\}$, we have
\begin{align*}
\frac{|\Sigma'|}{C\varepsilon}\leq-a_{ii}\leq \frac{C|\Sigma'|}{\varepsilon},\quad i=1,2.
\end{align*}
\end{lemma}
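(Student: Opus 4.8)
\textbf{Proof plan for Lemma \ref{lemma2.6}.}
The plan is to mimic the structure of the proof of Lemma \ref{lemma_a11}, using the identity $-a_{ii}=\int_\Omega|\nabla v_i|^2$ that comes from Green's formula, and then splitting the integral as in \eqref{a11'} into the contribution over $\Sigma$, the contribution over $\Omega_{R_1}\setminus\Sigma$, and the contribution over $\Omega\setminus\Omega_{R_1}$. The first piece is handled exactly as in \eqref{first_term}: on $\Sigma$ one has $\delta(x')=\varepsilon$ and $|\nabla v_i|\sim 1/\varepsilon$ by Proposition \ref{prop1} (its analog in this setting), so the $\Sigma$ contribution is comparable to $|\Sigma'|/\varepsilon$. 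The last piece is $O(1)$ by \eqref{v1--bounded1}. So the whole point is to estimate the middle piece, which by the vertical integration argument (using $\frac1C(\varepsilon+d^m(x'))\le\delta(x')\le C(\varepsilon+d^m(x'))$ and \eqref{v1-bounded1'}) reduces to showing
\begin{equation*}
\frac{1}{C}\,\rho_n^m(\varepsilon)\ \le\ \int_{B'_{R_1}\setminus\Sigma'}\frac{dx'}{\varepsilon+d^m(x')}\ \le\ C\Big(\frac{|\Sigma'|}{\varepsilon}+\rho_n^m(\varepsilon)\Big)
\end{equation*}
after multiplying the bulk $1/(\varepsilon+d^m)^2$ bound by the height $\delta\sim\varepsilon+d^m$.

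The core computation is therefore the estimate of $\int_{B'_{R_1}\setminus\Sigma'}(\varepsilon+d^m(x'))^{-1}\,dx'$, which I would carry out by passing, as in the $n=3$ and $n\ge4$ cases of Lemma \ref{lemma_a11}, to the annular region $B'_{R_1}\setminus B'_{r_0}$ and using \eqref{sigma}. Split $B'_{R_1}\setminus B'_{r_0}$ into the part $B'_{R_0}\setminus B'_{r_0}$ (where $d\le$ const, so the integrand is $\le C/\varepsilon$ and the contribution is $\le CR_0^{n-1}/\varepsilon\le C|\Sigma'|/\varepsilon$) and the part $B'_{R_1}\setminus B'_{R_0}$, where $d(x')=|x'|-R_0=:t$ and the integral becomes, up to constants, $\int_0^{R_1-R_0}\frac{(t+R_0)^{n-2}}{\varepsilon+t^m}\,dt$. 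Expanding $(t+R_0)^{n-2}$ and using $R_0^{n-1}/\varepsilon\le C|\Sigma'|/\varepsilon$ for the lowest-order term, the remaining terms are controlled by $\int_0^{R_1-R_0}\frac{t^{n-2}}{\varepsilon+t^m}\,dt$ plus lower-order analogs; a scaling $t=\varepsilon^{1/m}\tau$ shows this behaves like $\varepsilon^{(n-1)/m-1}$ when $m>n-1$, like $|\ln\varepsilon|$ when $m=n-1$, and like $O(1)$ when $m<n-1$, i.e. exactly $\rho_n^m(\varepsilon)/\varepsilon$ up to constants. For the matching lower bound one again inscribes a cone $Q\subset B'_{R_1}\setminus\Sigma'$ with vertex $p\in\partial\Sigma'$ so that $d(x')\le\dist(x',p)$ on $Q$, and integrates in polar coordinates centered at $p$ to get $\int_Q\frac{dx'}{\varepsilon+\dist^m(x',p)}\ge\frac1C\int_0^{(R_1-R_0)/2}\frac{r^{n-2}}{\varepsilon+r^m}\,dr$, which again gives the claimed lower bound $\rho_n^m(\varepsilon)/\varepsilon$ by the same scaling.

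Assembling the three pieces into \eqref{a11'} gives $\frac1{C\varepsilon}(|\Sigma'|+\rho_n^m(\varepsilon))\le -a_{ii}\le \frac C\varepsilon(|\Sigma'|+\rho_n^m(\varepsilon))$; the case $a_{22}$ is identical, and the final ``in particular'' statement follows by choosing $\varepsilon_0^*=\min\{(\rho_n^m)^{-1}(|\Sigma'|),\varepsilon^*\}$ so that $\rho_n^m(\varepsilon)\le|\Sigma'|$ and the two terms in the bracket are comparable to $|\Sigma'|$. I expect the main obstacle to be purely bookkeeping in the one-dimensional integral $\int_0^{R_1-R_0}\frac{(t+R_0)^{n-2}}{\varepsilon+t^m}\,dt$: one must carefully separate the ``flat-boundary'' contribution $R_0^{n-1}/\varepsilon$ (absorbed into $|\Sigma'|/\varepsilon$ via \eqref{sigma}) from the genuine singular contribution, check the three regimes $m\gtrless n-1$ against the definition of $\rho_n^m$, and make sure the constant in \eqref{sigma} (which depends on the aspect-ratio bound $R_0/\widetilde R_0\ge a$) is not inadvertently used in a way that depends on $\varepsilon$. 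The rest is a routine transcription of the argument already given for $m=2$.
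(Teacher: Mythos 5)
Your proposal follows the paper's proof essentially verbatim: the same three-piece decomposition of $-a_{ii}=\int_\Omega|\nabla v_i|^2$, the same reduction of the middle piece to $\int_{B'_{R_1}\setminus\Sigma'}(\varepsilon+d^m(x'))^{-1}dx'$, the same annular splitting with $(t+R_0)^{n-2}$ expanded and the lowest-order term absorbed via \eqref{sigma} (the paper invokes Young's inequality for the cross terms), and the same cone construction at $p\in\partial\Sigma'$ for the lower bound in the three regimes $m\gtrless n-1$. The only blemish is that your displayed target inequality for the middle integral should read $\rho_n^m(\varepsilon)/\varepsilon$ rather than $\rho_n^m(\varepsilon)$ on both sides, a slip you implicitly correct later when you identify the scaling as $\varepsilon^{(n-1)/m-1}$.
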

\begin{proof}
Similarly as the proof of Lemma \ref{lemma_a11}, we only need to estimate $a_{11}=-\int_{\Omega}|\nabla v_{1}|^{2}$. We mainly deal with the middle term $\int_{\Omega_{R_1}\setminus\Sigma}|\nabla v_{1}|^{2}$, because the first and last term, the estimates for $\int_{\Sigma}|\nabla v_{1}|^{2}$ and $\int_{\Omega\setminus \Omega_{R_1}}|\nabla v_{1}|^{2}$ are the same as \eqref{first_term} and \eqref{R3}. The following constant $C$ is independent on $\varepsilon$, $R_0$ and $|\Sigma'|$. In view of (\ref{v1-bounded1'}), we have
\begin{align*}
&\int_{B'_{R_{1}}\setminus\Sigma'}
\int_{h_2(x')}^{\varepsilon+h_1(x')}\frac{1}{C(\varepsilon+d^m(x'))^{2}}dx_ndx'\\
&\leq
 \int_{\Omega_{R_1}\setminus\Sigma}|\nabla v_{1}|^{2}\\
 &\leq\int_{B'_{R_{1}}\setminus\Sigma'}
\int_{h_2(x')}^{\varepsilon+h_1(x')}\frac{C}{(\varepsilon+d^m(x'))^2}dx_ndx',
\end{align*}
that is,
\begin{align*}
\int_{B'_{R_{1}}\setminus\Sigma'}\frac{dx'}{C(\varepsilon+d^m(x'))}\leq
\int_{\Omega_{R_1}\setminus\Sigma}|\nabla v_{1}|^{2}\leq\int_{B'_{R_{1}}\setminus\Sigma'}\frac{Cdx'}{\varepsilon+d^m(x')}.
\end{align*}

{\bf Case 1.}  $m>n-1$. Using the Young's inequality, we have

\begin{align*}
\int_{B'_{R_{1}}\setminus\Sigma'}\frac{dx'}{\varepsilon+d^m(x')}&\leq\int_{B'_{R_{1}}\setminus B'_{r_{0}}}\frac{dx'}{\varepsilon+dist^{m}(x',B'_{R_{0}})}\\
&\leq\int_{r_{0}}^{R_{0}}\frac{Cr^{n-2}}{\varepsilon}dr+\int_{R_{0}}^{R_{1}}\frac{Cr^{n-2}}{\varepsilon+(r-R_{0})^{m}}dr\\
&\leq\frac{C(R_0^{n-1}-r_0^{n-1})}{\varepsilon}+\int_{0}^{R_1-R_0}\frac{C(t+R_{0})^{n-2}}{\varepsilon+t^m}dt\\
&\leq C\left(\frac{R_0^{n-1}}{\varepsilon}+\int_{0}^{R_1-R_0}\frac{R_{0}^{n-2}}{\varepsilon+t^m}dt
+\int_{0}^{R_1-R_0}\frac{t^{n-2}}{\varepsilon+t^m}dt\right)\\
&\leq C\left(\frac{R_0^{n-1}}{\varepsilon}+\varepsilon^{\frac{1-m}{m}}R_{0}^{n-2}+\varepsilon^{\frac{n-1-m}{m}}\right)\\
&\leq C\left(\frac{|\Sigma'|}{\varepsilon}+\varepsilon^{\frac{n-1-m}{m}}\right).
\end{align*}
On the other hand, similar as before, for a point $p\in\partial\Sigma'$, construct a small cone $Q\subset B'_{R_{1}}\setminus\Sigma'$ with $p$ as the vertex,  such that $dist(x',\Sigma')\leq dist(x',p)$ whenever $x'\in Q$. Then for sufficiently small $\varepsilon$,
\begin{align*}
\int_{B'_{R_{1}}\setminus\Sigma'}\frac{dx'}{\varepsilon+d^m(x')}&\geq\int_{Q}\frac{dx'}{\varepsilon+dist^{m}(x',p)}\\
&\geq\frac{1}{C}\int_{0}^{R_{1}-R_{0}}\frac{r^{n-2}}{\varepsilon+r^{m}}dr\\
&\geq\frac{1}{C}\varepsilon^{\frac{n-1-m}{m}}.
\end{align*}
Thus, we obtain
\begin{align*}
\frac{1}{C}\left(\frac{|\Sigma'|}{\varepsilon}+\varepsilon^{\frac{n-1-m}{m}}\right)\leq-a_{11}\leq C\left(\frac{|\Sigma'|}{\varepsilon}+\varepsilon^{\frac{n-1-m}{m}}\right),
\end{align*}
that is, \eqref{a_11m} for $m>n-1$.

{\bf Case 2.}  $m=n-1$. Similarly, we have for $0<\varepsilon<\frac{1}{e}$,
\begin{align*}
\int_{B'_{R_{1}}\setminus\Sigma'}\frac{dx'}{\varepsilon+d^m(x')}
&\leq C\left(\frac{R_0^{n-1}}{\varepsilon}+\int_{0}^{R_1-R_0}\frac{R_{0}^{n-2}}{\varepsilon+t^{n-1}}dt+\int_{0}^{R_1-R_0}\frac{t^{n-2}}{\varepsilon+t^{n-1}}dt\right)\\
&\leq C\left(\frac{|\Sigma'|}{\varepsilon}+|\ln\varepsilon|\right).
\end{align*}
For the lower bound, similarly as above, for sufficiently small $\varepsilon$, we have
\begin{align*}
\int_{B'_{R_{1}}\setminus\Sigma'}\frac{dx'}{\varepsilon+d^m(x')}&\geq\int_{Q}\frac{dx'}{\varepsilon+dist^{m}(x',p)}\\
&\geq\frac{1}{C}\int_{0}^{R_{1}-R_{0}}\frac{r^{n-2}}{\varepsilon+r^{n-1}}dr\\
&\geq\frac{1}{C}|\ln\varepsilon|.
\end{align*}
Thus,
\begin{align*}
 \frac{1}{C}\left(\frac{|\Sigma'|}{\varepsilon}+|\ln\varepsilon|\right)\leq -a_{11}\leq C\left(\frac{|\Sigma'|}{\varepsilon}+|\ln\varepsilon|\right),
\end{align*}
that is, \eqref{a_11m} for $m=n-1$.

{\bf Case 3.}  $m<n-1$. As above,
\begin{align*}
\int_{B'_{R_{1}}\setminus\Sigma'}\frac{dx'}{\varepsilon+d^m(x')}
&\leq C\left(\frac{R_0^{n-1}}{\varepsilon}+\int_{0}^{R_1-R_0}\frac{R_{0}^{n-2}}{\varepsilon+t^m}dt+\int_{0}^{R_1-R_0}\frac{t^{n-2}}{\varepsilon+t^m}dt\right)\\
&\leq C\left(\frac{|\Sigma'|}{\varepsilon}+1\right),
\end{align*}
and
\begin{align*}
\int_{B'_{R_{1}}\setminus\Sigma'}\frac{dx'}{\varepsilon+d^m(x')}&\geq\int_{Q}\frac{dx'}{\varepsilon+dist^{m}(x',p)}\\
&\geq\frac{1}{C}\int_{0}^{R_{1}-R_{0}}\frac{r^{n-2}}{\varepsilon+r^{m}}dr\\
&\geq\frac{1}{C}.
\end{align*}
Thus, we have that for sufficiently small $\varepsilon$,
\begin{align*}
 \frac{1}{C}\left(\frac{|\Sigma'|}{\varepsilon}+1\right)\leq -a_{11}\leq C\left(\frac{|\Sigma'|}{\varepsilon}+1\right),
\end{align*}
that is, \eqref{a_11m} for $m<n-1$. Lemma \ref{lemma2.6} is proved.
\end{proof}

\section{Proof of Theorem \ref{thm2}}\label{extend}

We decompose the solution $u$ of \eqref{equinfty2} as follows
\begin{equation*}
u(x)=(C_{1}-\varphi(0))v_{1}(x)+v_{0}(x)+\varphi(0),\qquad~x\in\,\widetilde{\Omega},
\end{equation*}
where $v_0\in{C}^{2}(\widetilde{\Omega})$ is the solution of (\ref{equ_v0}) and $v_{1}\in{C}^{2}(\widetilde{\Omega})$ satisfies
\begin{equation*}
\begin{cases}
\Delta{v}_{1}=0,&\mbox{in}~\widetilde{\Omega},\\
v_{1}=1,&\mbox{on}~\partial{D}_{1},\\
v_{1}=0,&\mbox{on}~\partial{D}.
\end{cases}
\end{equation*}
Then we have
\begin{equation}\label{decomposition_u22}
\nabla{u}(x)=(C_{1}-\varphi(0))\nabla{v}_{1}(x)+\nabla{v}_{0}(x),\qquad~x\in\,\widetilde{\Omega}.
\end{equation}
Since $u=C_{1}$ on $\partial{D}_{1}$ and $\|u\|_{H^{1}(\Omega)}\leq\,C$ (independent of $\varepsilon$), by using the trace embedding theorem,
\begin{equation*}
|C_{1}|\leq\,C.
\end{equation*}
We need to estimate $|\nabla v_{1}|$, $|\nabla v_{0}|$ and $|C_{1}-\varphi(0)|$, respectively.

\subsection{Outline of the proof of Theorem \ref{thm2}}

Similarly to the proof in Subsection \ref{proof of thm1}, we have the same estimates of $|\nabla v_{1}|$ as in Proposition \ref{prop1}.
Defining
\begin{equation*}
-a_{11}:=\int_{\widetilde\Omega}|\nabla v_{1}|^{2}dx.
\end{equation*}
So the estimate of $-a_{11}$ is the same as in Lemma \ref{lemma_a11}. Besides, we need the following estimates of $|\nabla v_{0}|$.

\begin{prop}\label{prop2}
Assume the above, let $v_{0}\in{H}^1(\widetilde\Omega)$ be the
weak solution of \eqref{equ_v0}, there exists some constant $\varepsilon_{1}^*>0$, such that, if $0<\varepsilon<\varepsilon_{1}^*$, we have
\begin{align}
|\nabla v_{0}(x)|&\leq\,\frac{C|\varphi(x^{\prime},h(x'))-\varphi(0)|}{\varepsilon+dist^2(x',\Sigma')}+C||\varphi||_{C^{2}(\partial D)},\quad x\in\Omega_{R_{1}},\label{nabla_v0}\\
|\nabla v_{0}(x)|&\leq\,C||\varphi||_{C^{2}(\partial D)},\quad\,x\in\widetilde\Omega\setminus\Omega_{R_{1}}.\label{nabla_v0_22}
\end{align}
\end{prop}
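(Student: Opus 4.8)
The plan is to mimic the structure of the proof of Proposition \ref{prop1}, that is, to compare $v_0$ with an explicit auxiliary function and then control the difference by the energy/iteration method of \cite{bll,llby}. Unlike the case of $v_1$, however, the boundary data of $v_0$ is $\varphi(x)-\varphi(0)$ on $\partial D$ and $0$ on $\partial D_1$, so the natural competitor is not $\bar u_1$ but a function interpolating between these two data along the narrow channel. First I would set, for $(x',x_n)\in\Omega_{R_1}$,
\begin{equation*}
\bar v_0(x',x_n):=\frac{x_n-h(x')}{\varepsilon+h_1(x')-h(x')}\,\bigl(\varphi(x',h(x'))-\varphi(0)\bigr),
\end{equation*}
extended smoothly to all of $\widetilde\Omega$ so that $\bar v_0=\varphi-\varphi(0)$ on $\partial D$, $\bar v_0=0$ on $\partial D_1$, and $\|\bar v_0\|_{C^2(\widetilde\Omega\setminus\Omega_{R_1/2})}\leq C\|\varphi\|_{C^2(\partial D)}$. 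A direct differentiation, using $\delta(x')=\varepsilon+h_1(x')-h(x')\sim\varepsilon+d^2(x')$ and the regularity of $\varphi$ and $h$, gives
\begin{equation*}
|\nabla\bar v_0(x)|\leq\frac{C|\varphi(x',h(x'))-\varphi(0)|}{\varepsilon+d^2(x')}+C\|\varphi\|_{C^2(\partial D)},\qquad x\in\Omega_{R_1},
\end{equation*}
and similarly $|\Delta\bar v_0(x)|\leq C|\varphi(x',h(x'))-\varphi(0)|(\varepsilon+d^2(x'))^{-1}\delta(x')^{-1}+C\|\varphi\|_{C^2}\leq C(\varepsilon+d^2(x'))^{-1}\|\varphi\|_{C^2(\partial D)}$ on $\Omega_{R_1}\setminus\Sigma$, while $\Delta\bar v_0$ need not vanish on $\Sigma$ (here $\varphi(x',h(x'))$ varies along $\Sigma'$), but is still bounded there by $C\|\varphi\|_{C^2}$; this is a harmless difference from the $v_1$ case and is absorbed into the $C\|\varphi\|_{C^2}$ term.

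Next I set $w_0:=v_0-\bar v_0$, which solves $-\Delta w_0=\Delta\bar v_0$ in $\widetilde\Omega$ with $w_0=0$ on $\partial\widetilde\Omega$. By the maximum principle $\|v_0\|_{L^\infty}\leq C\|\varphi\|_{C^0(\partial D)}$, hence $\|w_0\|_{L^\infty(\widetilde\Omega)}\leq C\|\varphi\|_{C^2(\partial D)}$. Multiplying by $w_0$ and integrating by parts, together with the bound on $\Delta\bar v_0$ and $\int_{\Omega_{R_1}\setminus\Sigma}(\varepsilon+d^2(x'))^{-1}\,dx\leq C$ (this integral is over the $n$-dimensional region, so it is finite — compare the volume computation in Lemma \ref{lemma_a11}), I get the global energy bound $\int_{\widetilde\Omega}|\nabla w_0|^2\leq C\|\varphi\|_{C^2(\partial D)}^2$. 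Then I run the same three-case Caccioppoli/iteration argument as in STEP 1.2 of the proof of Proposition \ref{prop1}, localized on $\widehat\Omega_t(z')$, to obtain $\int_{\widehat\Omega_\delta(z')}|\nabla w_0|^2\leq C\delta^n\|\varphi\|_{C^2}^2$ for $z'$ in each of the three regions $\Sigma'_{-\sqrt\varepsilon}$, $\Sigma'_{\sqrt\varepsilon}\setminus\Sigma'_{-\sqrt\varepsilon}$, and $B'_{R_1}\setminus\Sigma'_{\sqrt\varepsilon}$; the only modification is that the right-hand side $\int_{\widehat\Omega_s(z')}|\Delta\bar v_0|^2$ is estimated exactly as $\int|\Delta\bar u_1|^2$ was, up to the extra factor $\|\varphi\|_{C^2}^2$. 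Finally, rescaling $\widehat\Omega_\delta(z')$ to unit size and applying the $W^{2,p}$ bootstrap as in STEP 1.3 yields $|\nabla w_0(z',z_n)|\leq C\|\varphi\|_{C^2(\partial D)}$ throughout $\Omega_{R_1}$; combined with the standard interior/boundary elliptic estimate $|\nabla w_0|\leq C\|\varphi\|_{C^2}$ on $\widetilde\Omega\setminus\Omega_{R_1}$, and with the pointwise bound on $\nabla\bar v_0$ above, this gives \eqref{nabla_v0} and \eqref{nabla_v0_22}.

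The main obstacle, and the only genuinely new point compared to Proposition \ref{prop1}, is bookkeeping the inhomogeneous Dirichlet data: I must choose $\bar v_0$ so that its gradient reproduces \emph{exactly} the singular factor $|\varphi(x',h(x'))-\varphi(0)|/(\varepsilon+d^2(x'))$ claimed in \eqref{nabla_v0}, and then verify that all the error terms generated — in particular $\Delta\bar v_0$ on $\Sigma$ where the ``flat'' cancellation used for $\bar u_1$ is no longer available, and the tangential derivatives $\partial_{x_i}\bigl(\varphi(x',h(x'))-\varphi(0)\bigr)$ — are dominated by $C\|\varphi\|_{C^2(\partial D)}$ and therefore do not interact with the iteration. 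Once the competitor is correctly chosen, the energy estimate and rescaling steps are word-for-word parallel to those already carried out for $w_1$, so I would present them briefly and refer to Subsection \ref{subsec_prop1} and to \cite{LX} for the routine details.
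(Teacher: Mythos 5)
Your proposal follows essentially the same route as the paper: its auxiliary function is $\hat u=(1-\bar u_1)\bigl(\varphi(x',h(x'))-\varphi(0)\bigr)$, and the subsequent pointwise bounds on $\nabla\hat u$ and $\Delta\hat u$, the global energy bound for $w_0=v_0-\hat u$, the three-case Caccioppoli iteration, and the rescaling step are exactly as you describe (in Case 1 the Laplacian of the competitor is merely bounded rather than zero, which you correctly note is harmless). One slip: your displayed formula for $\bar v_0$ uses the factor $\frac{x_n-h(x')}{\varepsilon+h_1(x')-h(x')}=\bar u_1$, which vanishes on $\partial D$ and equals $\varphi(x',h(x'))-\varphi(0)$ on $\partial D_1$ --- the opposite of the boundary values you (correctly) require; replacing this factor by $1-\bar u_1$ fixes it and changes none of the estimates.
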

 The proof will be given later. We first use it to prove Theorem \ref{thm2}.

\begin{proof}[\bf Proof of Theorem \ref{thm2}.] Recalling \eqref{decomposition_u22}, by the third line of \eqref{equinfty2}, we have
$$(C_{1}-\varphi(0))\int_{\partial{D}_{1}}\frac{\partial{v}_{1}}{\partial\nu}+\int_{\partial{D}_{1}}\frac{\partial{v}_{0}}{\partial\nu}=0.$$
Recalling the definition of $v_{1}$, we have
\begin{equation*}
\int_{\partial{D}_{1}}\frac{\partial{v}_{1}}{\partial\nu}=\int_{\partial{D}_{1}}\frac{\partial{v}_{1}}{\partial\nu}v_{1}=-\int_{\widetilde\Omega}|\nabla{v}_{1}|^{2}=a_{11}.
\end{equation*}
Hence, 
\begin{align*}|C_{1}-\varphi(0)|=\frac{|\widetilde{Q}[\varphi]|}{|a_{11}|}.
\end{align*}
Thus, 
$$|\nabla{u}|\leq\frac{|\widetilde{Q}[\varphi]|}{|a_{11}|}|\nabla v_{1}|+|\nabla v_{0}|.$$
Using \eqref{v1-bounded1}-\eqref{v1--bounded1} for $|\nabla v_{1}|$, Proposition \ref{prop2} and the estimates for $-a_{11}$, we obtain \eqref{upperbound2'}-\eqref{upperbound2 2'}. Theorem \ref{thm2} is proved.
\end{proof}

\subsection{Proof of Proposition \ref{prop2}}

We introduce a function $\hat{u}\in C^{2}(\mathbb R^{n})$, such that $\hat{u}=0$ on $\partial{D}_{1}$, $\hat{u}=\varphi(x)-\varphi(0\,)$ on $\partial{D}$,
\begin{align*}
\hat{u}(x)
=(1-\bar{u}_{1}(x))\Big(\varphi(x^{\prime},h(x^{\prime}))-\varphi(0)\Big),\quad\mbox{in}~~\Omega_{R_{1}},
\end{align*}
and
\begin{equation*}
\|\hat{u}\|_{C^{2}(\mathbb{R}^{n}\setminus \Omega_{R_{1}})}\leq\,C.
\end{equation*}

By a direct calculation,  for $x\in\Sigma$,
\begin{align*}
&\partial_{x_{i}}\hat{u}=(1-\bar{u}_{1})\partial_{x_{i}}\varphi(x',0),\ \ \quad\quad i=1,2,\cdots,n-1,\\
&\partial_{x_{n}}\hat{u}=-\frac{1}{\varepsilon}\Big(\varphi(x^{\prime},0)-\varphi(0)\Big),\\
&\partial_{x_{i}x_{i}}\hat{u}=(1-\bar{u}_{1})\partial_{x_{i}x_{i}}\varphi(x',0), \quad i=1,2,\cdots,n-1,\\
&\partial_{x_{n}x_{n}}\hat{u}=0.
\end{align*}
Then
\begin{align}
&|\nabla\hat{u}(x)|\leq\frac{|\varphi(x^{\prime},0)-\varphi(0)|}{\varepsilon}+C\|\nabla\varphi\|_{L^\infty(\partial D\cap \Sigma)},\quad x\in\Sigma,\label{nabla_hat_u}
\end{align}
and
\begin{align}
&\Delta \hat{u}(x)=(1-\bar{u}_{1})\Delta_{x'}\varphi(x',0),\quad x\in\Sigma.\label{delta_hat_u}
\end{align}
For  $x\in\Omega_{R_{1}}\setminus\Sigma$, and  $ i=1,2,\cdots,n-1,$
\begin{align*}
\partial_{x_{i}}\hat{u}&=\left(1-\bar{u}_{1}\right)\Big(\partial_{x_{i}}\varphi
+\partial_{x_{n}}\varphi\partial_{x_{i}}h\Big)-\partial_{x_{i}}\bar{u}_{1}~\Big(\varphi(x',h(x'))-\varphi(0))\Big),  \\
\partial_{x_{n}}\hat{u}&=-\partial_{x_{n}}\bar{u}_{1}~\Big(\varphi(x',h(x'))-\varphi(0))\Big),
\end{align*}
and
\begin{align*}
\partial_{x_{i}x_{i}}\hat{u}&=\left(1-\bar{u}_{1}\right)\Big(\partial_{x_{i}x_{i}}\varphi+2\partial_{x_{n}x_{i}}\varphi\partial_{x_{i}}h
+\partial_{x_{n}x_{n}}\varphi(\partial_{x_{i}}h)^2+\partial_{x_n}\varphi \partial_{x_ix_i}h\Big)\\
&\quad-2\partial_{x_{i}}\bar{u}_{1}(\partial_{x_i}\varphi+\partial_{x_n}\varphi\partial_{x_i}h)-\partial_{x_ix_i}\bar{u}_{1}(\varphi(x',h(x'))-\varphi(0)), \\
\partial_{x_{n}x_{n}}\hat{u}&=0.
\end{align*}
Then, for $x\in \Omega_{R_1}\setminus\Sigma$,
\begin{align}
&|\nabla\hat{u}(x)|\leq\frac{C|\varphi(x^{\prime},h(x'))-\varphi(0)|}{\varepsilon+d^2(x')}+C\|\nabla\varphi\|_{L^\infty(\partial D\cap \Omega_{R_1})},\label{nabla_hat_u'}\\
&|\Delta \hat{u}(x)|\leq C\left(\frac{1}{\varepsilon+d^2(x')}+1\right)\|\varphi\|_{C^2(\partial D)}.\label{delta_hat_u'}
\end{align}

Denote
\begin{equation*}
w_{0}:= v_{0}-\hat{u}.
\end{equation*}
Then by the definition of $v_{0}$, \eqref{equ_v0},
\begin{equation}\label{w}
\begin{cases}
-\Delta {w}_{0}=\Delta\hat{u},\quad&\mbox{in}~\widetilde\Omega,\\
 w_{0}=0,\quad\quad&\mbox{on}~\partial\widetilde\Omega.
\end{cases}
\end{equation}
Similarly, as \eqref{nabla_w_out} and \eqref{w_bdd}, we have
\begin{equation*}
\|\nabla {w}_{0}\|_{L^{\infty}(\widetilde\Omega\setminus\Omega_{R_1})}\leq\,C,
\end{equation*}
and
\begin{equation*}
\| w_{0}\|_{L^{\infty}(\tilde\Omega)}\leq\,C||\varphi||_{L^{\infty}(\partial D)}.
\end{equation*}
In order to prove \eqref{nabla_v0}-\eqref{nabla_v0_22}, we only need to prove
\begin{equation*}
\left\|\nabla {w}_{0}\right\|_{L^{\infty}(\Omega_{R_{1}})}\leq\,C.
\end{equation*}
Firstly, multiplying the equation in \eqref{w} by $w_{0}$ and integrating by parts, it follows from \eqref{delta_hat_u} and \eqref{delta_hat_u'} that
\begin{align*}
\int_{\widetilde{\Omega}}|\nabla{w}_{0}|^{2}
=\int_{\widetilde{\Omega}}w_{0}\left(\Delta\hat{u}\right)
\leq\,\|w_{0}\|_{L^{\infty}(\widetilde{\Omega})}\left(\int_{\Omega_{R_{1}}\setminus\Sigma}|\Delta\hat{u}|+C\right)
\leq\,C.
\end{align*}
Instead of \eqref{FsFt11}, we obtain
\begin{align}\label{FsFt22}
\int_{\widehat{\Omega}_{t}(z')}|\nabla{w}_{0}|^{2}\leq\,\frac{C}{(s-t)^{2}}\int_{\widehat{\Omega}_{s}(z')}|w_{0}|^{2}
+(s-t)^{2}\int_{\widehat{\Omega}_{s}(z')}\left|\Delta\hat{u}\right|^{2}.
\end{align}

{\bf Case 1.} For $z'\in\Sigma'_{-\sqrt{\varepsilon}}$  and $0<s<\sqrt{\varepsilon}$. Using the assumption on $\varphi$, we have
\begin{align*}
\int_{\widehat{\Omega}_{s}(z')}|\Delta\hat{u}|^{2}&=\int_{|x'-z'|<s}\int_{0}^{\varepsilon}(1-\bar{u}_{1})^{2}(\Delta_{x'}\varphi(x',0))^{2}dx_{n}dx'\\
&=\int_{|x'-z'|<s}(\Delta_{x'}\varphi(x',0))^{2}dx'\int_{0}^{\varepsilon}(1-\bar{u}_{1})^{2}dx_{n}\\
&=C\varepsilon s^{n-1}||\varphi||_{C^2(\partial D)}^{2},
\end{align*}
and
$$\int_{\widehat{\Omega}_{s}(z')}|w_{0}|^{2}\leq C\varepsilon^{2}\int_{\widehat{\Omega}_{s}(z')}|\nabla w_{0}|^{2}.$$
So \eqref{FsFt22} becomes
\begin{align*}
\int_{\widehat{\Omega}_{t}(z')}|\nabla w_{0}|^{2}
&\leq\left(\frac{C\varepsilon}{s-t}\right)^{2}\int_{\widehat{\Omega}_{s}(z')}|\nabla w_{0}|^{2}+C\varepsilon(s-t)^{2}s^{n-1}||\varphi||_{C^2(\partial D)}^{2}.
\end{align*}
Similarly, as in the steps 1.2--1.3 in the proof of Proposition \ref{prop1}, we obtain
$$\int_{\widehat{\Omega}_{\delta}(z')}|\nabla w_{0}|^{2}\leq C\varepsilon^{n+2}||\varphi||_{C^{2}(\partial D)}^{2},$$
and
$$|\nabla w_{0}(z',z_{n})|\leq C\varepsilon||\varphi||_{C^{2}(\partial D)},\quad\forall~0<z_{n}<\varepsilon.$$

{\bf Case 2.} For $z'\in \Sigma'_{\sqrt{\varepsilon}}\setminus\Sigma'_{-\sqrt{\varepsilon}}$ and $0<s<\sqrt{\varepsilon}$.
In view of (\ref{delta_hat_u'}), we have
\begin{align*}
  \int_{\widehat{\Omega}_{s}(z')}|\Delta\hat{u}|^{2}&\leq C\|\varphi\|_{C^2(\partial D)}^2\left(\int_{|x'-z'|<s}\int_{h(x')}^{\varepsilon+h_1(x')} \frac{1}{(\varepsilon+d^2(x'))^2}dx_ndx'+\varepsilon s^{n-1}\right)\nonumber\\
  &\leq C\|\varphi\|_{C^2(\partial D)}^2\left(\int_{|x'-z'|<s}\frac{1}{\varepsilon+d^2(x')}dx'+\varepsilon s^{n-1}\right)\nonumber\\
  &\leq \frac{Cs^{n-1}}{\varepsilon}\|\varphi\|_{C^2(\partial D)}^2.
\end{align*}
Notice that
$$\int_{\widehat{\Omega}_{s}(z')}|w_{0}|^{2}\leq C\varepsilon^{2}\int_{\widehat{\Omega}_{s}(z')}|\nabla w_{0}|^{2}.$$
By using the similar method as steps 1.2--1.3 in the proof of Proposition 2.1, we obtain that
$$|\nabla w_{0}(z',z_{n})|\leq C||\varphi||_{C^{2}(\partial D)},\quad\forall~h(z')<z_{n}<\varepsilon+h_1(z').$$
{\bf Case 3.} For $z'\in B'_{R_1}\setminus\Sigma'_{\sqrt{\varepsilon}}$ and $0<s<\frac{2}{3}d(z')$.
As above, we have
\begin{align*}
  \int_{\widehat{\Omega}_{s}(z')}|\Delta\hat{u}|^{2}
  &\leq \frac{Cs^{n-1}}{d^2(z')}\|\varphi\|_{C^2(\partial D)}^2,
\end{align*}
and
$$\int_{\widehat{\Omega}_{s}(z')}|w_{0}|^{2}\leq Cd^4(z')\int_{\widehat{\Omega}_{s}(z')}|\nabla w_{0}|^{2}.$$
Similarly as above, we obtain
$$|\nabla w_{0}(z',z_{n})|\leq C||\varphi||_{C^{2}(\partial D)},\quad\forall~h(z')<z_{n}<\varepsilon+h_1(z').$$

Thus, (\ref{nabla_v0}) and (\ref{nabla_v0_22}) follow from (\ref{nabla_hat_u}), (\ref{nabla_hat_u'}) and the estimates of $|\nabla w_0|$.
The proof of Proposition \ref{prop2} is completed.

\section{Proof of Theorem \ref{thm1.6}}\label{sec_thm1.6}

Similarly, we decompose the solution $u(x)$ of \eqref{equinftydiv} as follows
\begin{equation*}
u(x)=C_{1}V_{1}(x)+C_{2}V_{2}(x)+V_{3}(x),\qquad~x\in\,\Omega ,
\end{equation*}
where $V_{j}\in{C}^{2}(\Omega)~(j=1,2,3)$, respectively, satisfying
\begin{equation}\label{equ_V1}
\begin{cases}
\partial_{i}\left(A_{ij}(x)\partial_{j}V_{1}\right)=0,&\mathrm{in}~\Omega,\\
V_{1}=1,&\mathrm{on}~\partial{D}_{1},\\
V_{1}=0,&\mathrm{on}~\partial{D_{2}}\cup\partial{D},
\end{cases}
\end{equation}
\begin{equation}\label{equ_V2}
\begin{cases}
\partial_{i}\left(A_{ij}(x)\partial_{j}V_{2}\right)=0,&\mathrm{in}~\Omega,\\
V_{2}=1,&\mathrm{on}~\partial{D}_{2},\\
V_{2}=0,&\mathrm{on}~\partial{D_{1}}\cup\partial{D},
\end{cases}
\end{equation}
\begin{equation}\label{equ_V3}
\begin{cases}
\partial_{i}\left(A_{ij}(x)\partial_{j}V_{3}\right)=0,&\mathrm{in}~\Omega,\\
V_{3}=0,&\mathrm{on}~\partial{D}_{1}\cup\partial{D_{2}},\\
V_{3}=\varphi,&\mathrm{on}~\partial{D}.
\end{cases}
\end{equation}
Then we have
\begin{equation*}
\nabla{u}(x)=(C_{1}-C_{2})\nabla{V}_{1}(x)+C_{2}\nabla({V}_{1}+{V}_{2})(x)+\nabla{V}_{3}(x),\qquad~x\in\,\Omega.
\end{equation*}
We construct an auxiliary function
$\tilde{u}_{1}\in{C}^{2}(\mathbb{R}^{n})$ to fit this general elliptic equation, such that $\tilde{u}_{1}=1$ on
$\partial{D}_{1}$, $\tilde{u}_{1}=0$ on
$\partial{D}_{2}\cup\partial{D}$, in $\Omega_{R_{1}}$,
\begin{align}\label{utilde}
\tilde{u}_{1}(x)&
=\bar{u}_{1}(x)+\frac{\sum_{i=1}^{n-1}A_{ni}(x)\partial_{x_{i}}(h_{1}-h_{2})(x')}{4A_{nn}(x)}\left(\Big(\frac{2x_{n}-(\varepsilon+h_{1}(x')+h_{2}(x'))}{\varepsilon+h_{1}(x')-h_{2}(x')}\Big)^{2}-1\right),
\end{align}
and
\begin{equation*}
\|\tilde{u}_{1}\|_{C^{2}(\Omega\setminus \Omega_{R_{1}})}\leq\,C.
\end{equation*}
Similarly, we define $\tilde{u}_{2}=1$ on $\partial D_{2}$, $\tilde{u}_{2}=0$ on $\partial{D}_{1}\cup\partial{D}$, $\tilde{u}_{2}=1-\tilde{u}_{1}$ in $\Omega_{R_{1}}$, and $\|\tilde{u}_{2}\|_{C^{2}(\Omega\setminus \Omega_{R_{1}})}\leq\,C$.
Using the assumptions on $h_{1}$ and $h_{2}$, \eqref{h1h2'}--\eqref{h1h3}, a direct calculation still gives
\begin{align*}
\left|\nabla\tilde{u}_{1}(x)\right|&=\frac{1}{\varepsilon},\quad x\in\Sigma,\\
\frac{1}{C\big(\varepsilon+d^{2}(x')\big)}&\leq\left|\nabla\tilde{u}_{1}(x)\right|\leq\frac{C}{\varepsilon+d^{2}(x')},\quad x\in\Omega_{R_{1}}\setminus\Sigma.
\end{align*}
More importantly, thanks to the corrector term in \eqref{utilde}, we obtain the following bound
\begin{align}\label{f_tilde}
\partial_{i}(A_{ij}(x)\partial_{j}\tilde{u}_{1}(x))=0,\quad x\in\Sigma,
\end{align}
and
\begin{align}\label{f_tilde2}
\left|\partial_{i}(A_{ij}(x)\partial_{j}\tilde{u}_{1}(x))\right|\leq\frac{C}{\varepsilon+d^{2}(x')},\quad x\in\Omega_{R_{1}}\setminus\Sigma,
\end{align}
the same as \eqref{nabla2u_bar}. This is important to prove the following Proposition.

\begin{prop}\label{prop1'}
Assume the above, let $V_{1}, V_{2}, V_{3}\in{H}^1(\Omega)$ be the
weak solution of \eqref{equ_V1}, \eqref{equ_V2} and \eqref{equ_V3}, respectively. Then
\begin{equation}\label{nabla_w_i0''}
\|\nabla(V_{i}-\tilde{u}_{i})\|_{L^{\infty}(\Omega)}\leq\,C,~i=1,2,
\end{equation}
consequently,
\begin{equation}\label{V1_bounded1}
\frac{1}{C\varepsilon}\leq|\nabla V_{i}(x)|\leq\,\frac{C}{\varepsilon},\qquad\,x\in\Sigma,~i=1,2,
\end{equation}
\begin{equation}\label{V1-bounded1}
\frac{1}{C\left(\varepsilon+d^{2}(x')\right)}\leq|\nabla V_{i}(x)|\leq\,\frac{C}{\varepsilon+d^{2}(x')},\qquad\,x\in\Omega_{R_{1}}\setminus\Sigma,~i=1,2;
\end{equation}
\begin{equation}\label{V1--bounded1}
|\nabla V_{i}(x)|\leq\,C,\qquad\,x\in\Omega\setminus\Omega_{R_{1}},~i=1,2,
\end{equation}
and
\begin{equation}\label{V1+V2_bounded1}
|\nabla(V_{1}+V_{2})(x)|\leq\,C,\qquad\,x\in\Omega,
\end{equation}
\begin{equation}\label{nabla_V3}
|\nabla{V}_{3}(x)|\leq C||\varphi||_{L^{\infty}(\partial D)},\quad\,x\in\Omega,
\end{equation}
where $C$ is a {\it universal constant}, independent of $|\Sigma'|$.
\end{prop}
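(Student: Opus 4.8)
The plan is to transcribe the proof of Proposition~\ref{prop1} almost line by line, with $\bar u_i$ replaced by the modified auxiliary function $\tilde u_i$ from \eqref{utilde} and the Laplacian replaced by the divergence-form operator $\mathcal{L}u:=\partial_i\big(A_{ij}(x)\partial_j u\big)$. The only way $\Delta\bar u_1$ entered the original argument was through the two facts recorded in \eqref{nabla2u_bar}: that $\Delta\bar u_1$ vanishes on $\Sigma$ and is bounded by $C/(\varepsilon+d^2(x'))$ on $\Omega_{R_1}\setminus\Sigma$. The corrector term added in \eqref{utilde} is built precisely so that $\mathcal{L}\tilde u_1$ has the same two properties, which is exactly the content of \eqref{f_tilde}--\eqref{f_tilde2}; once these are granted, no new idea is needed.

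Concretely, I would set $W_1:=V_1-\tilde u_1$. By \eqref{equ_V1} and the construction of $\tilde u_1$ it solves
\begin{equation*}
\begin{cases}
-\mathcal{L}W_1=\mathcal{L}\tilde u_1 & \text{in }\Omega,\\
W_1=0 & \text{on }\partial\Omega,
\end{cases}
\end{equation*}
and since $\mathcal{L}$ is a \emph{scalar} uniformly elliptic operator the maximum principle still gives $0<V_1<1$, hence $\|W_1\|_{L^\infty(\Omega)}\le C$ (this is the one place where the scalar structure is used, exactly as in \cite{bll}). Outside $\Omega_{R_1}$, the bounds \eqref{ene01a}, \eqref{h1h3}, $A_{ij}\in C^2(\Omega)$ and $\|\tilde u_1\|_{C^2(\Omega\setminus\Omega_{R_1})}\le C$ give $|W_1|+|\nabla W_1|\le C$ by standard elliptic regularity, so it remains to bound $\|\nabla W_1\|_{L^\infty(\Omega_{R_1})}$.

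Next I would run the three sub-steps of STEP~1 of Proposition~\ref{prop1}. Testing the equation with $W_1$ and using \eqref{ene01a} together with \eqref{f_tilde}, \eqref{f_tilde2} yields $\int_\Omega|\nabla W_1|^2\le C$ as in STEP~1.1. Testing with $W_1\eta^2$ and using uniform ellipticity to absorb the gradient terms gives the same Caccioppoli inequality \eqref{FsFt11} with $\Delta\bar u_1$ replaced by $\mathcal{L}\tilde u_1$; then the three cases ($z'\in\Sigma'_{-\sqrt\varepsilon}$, where $\mathcal{L}\tilde u_1\equiv0$; $z'\in\Sigma'_{\sqrt\varepsilon}\setminus\Sigma'_{-\sqrt\varepsilon}$; $z'\in B'_{R_1}\setminus\Sigma'_{\sqrt\varepsilon}$) produce exactly the iteration inequalities of STEP~1.2, since the Poincaré-type estimates for $W_1$ in the thin domain and the $L^2$ bounds on $\mathcal{L}\tilde u_1$ are literally those of \eqref{nabla2u_bar} by \eqref{f_tilde}--\eqref{f_tilde2}; iterating gives $\int_{\widehat\Omega_\delta(z')}|\nabla W_1|^2\le C\delta^n$. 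Finally the rescaling $x'-z'=\delta y'$, $x_n=\delta y_n$ turns $\widehat\Omega_\delta(z')$ into a domain of unit size on which the transformed operator is still uniformly elliptic with coefficients of bounded $C^\alpha$ norm (a dilation does not increase the $C^2$ norm of the coefficients), so the $W^{2,p}$–Sobolev bootstrap of \cite{LX} gives \eqref{AAA} with $\Delta\bar u_1$ replaced by $\mathcal{L}\tilde u_1$, and combined with the energy decay this yields $|\nabla W_1(z',z_n)|\le C$ in all three cases. This proves \eqref{nabla_w_i0''}, and \eqref{V1_bounded1}--\eqref{V1--bounded1} then follow by the triangle inequality from the bounds on $|\nabla\tilde u_i|$ stated just before the Proposition, exactly as \eqref{v1-bounded1}--\eqref{v1--bounded1} followed from \eqref{nablau_bar1}.

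For \eqref{V1+V2_bounded1} and \eqref{nabla_V3} I would observe that $V_1+V_2-1$ solves $\mathcal{L}(V_1+V_2-1)=0$ with Dirichlet data $0$ on $\partial D_1\cup\partial D_2$ and $-1$ on $\partial D$, while $V_3$ solves $\mathcal{L}V_3=0$ with data $0$ on the inclusions and $\varphi$ on $\partial D$; in both cases the data agree on $\partial D_1$ and $\partial D_2$, so one may pick an $\varepsilon$-independent smooth function matching them and the energy argument of \cite{llby}, valid for general divergence-form elliptic equations, gives $|\nabla(V_1+V_2)|\le C$ and $|\nabla V_3|\le C\|\varphi\|_{L^\infty(\partial D)}$ in $\Omega$. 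The only real difficulty is bookkeeping: checking that passing from $\Delta$ to $\mathcal{L}$ does not spoil the Caccioppoli inequality (it does not, by uniform ellipticity) and that the corrector in \eqref{utilde} indeed yields \eqref{f_tilde}--\eqref{f_tilde2} (the direct computation indicated above); granting these, every step is a line-by-line copy of the proof of Proposition~\ref{prop1}.
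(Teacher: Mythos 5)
Your proposal is correct and follows essentially the same route as the paper: the paper likewise sets $\widetilde w_1=V_1-\tilde u_1$, uses the maximum principle for the $L^\infty$ bound, derives the global and local (Caccioppoli-type) energy estimates from the uniform ellipticity \eqref{ene01a} together with \eqref{f_tilde}--\eqref{f_tilde2}, runs the same three-case iteration and rescaling as in Proposition \ref{prop1}, and disposes of \eqref{V1+V2_bounded1} and \eqref{nabla_V3} by the argument of \cite{llby}. No gaps.
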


\begin{proof}[{\bf Proof of Proposition \ref{prop1'}}]

\noindent\textbf{STEP 1.} Proof of prove \eqref{nabla_w_i0''}. We prove it for $i=1$ and $i=2$ is the same.

Let
$$\widetilde{w}_{1}=V_{1}-\tilde{u}_{1}.$$
Similarly, instead of \eqref{w20}, we have
\begin{equation}\label{divergence w1}
\begin{cases}
-\displaystyle\partial_{i}(A_{ij}(x)\partial_{j} \widetilde{w}_{1})=\displaystyle\partial_{i}(A_{ij}(x)\partial_{j} \tilde{u}_{1})=:\tilde{f},\quad&\mbox{in }\Omega,\\
\widetilde{w}_{1}=0, &\mbox{on }\partial \Omega.
\end{cases}
\end{equation}
By the standard elliptic theory,
\begin{equation*}
|\widetilde{w}_{1}|+\left|\nabla\widetilde{w}_{1}\right|\leq\,C,
\quad\mbox{in}~~ \Omega\setminus\Omega_{R_{1}}.
\end{equation*}
On the other hand, by the maximum principle, we have
\begin{equation}\label{wtilde_bdd}
\|\widetilde{w}_{1}\|_{L^{\infty}(\Omega)}\leq\,C.
\end{equation}

\textbf{STEP 1.1. Boundedness of the energy.}
Multiplying the equation in (\ref{divergence w1}) by $\tilde{w}_{1}$, integrating by parts, using (\ref{ene01a}), \eqref{f_tilde}, \eqref{f_tilde2} and \eqref{wtilde_bdd}, we have
\begin{align*}
\lambda\int_{\Omega}|\nabla \widetilde{w}_{1}|^{2}dx
&\leq\int_{\Omega}A_{ij}\partial_{i}\widetilde{w}_{1}\partial_{j}\widetilde{w}_{1}dx\\
&=\,\int_{\Omega}\tilde{f}\,\widetilde{w}_{1}dx\\
&\leq\,\|\widetilde{w}_{1}\|_{L^{\infty}(\Omega)}\left(\int_{\Omega_{R_{1}}\setminus\Sigma}|\tilde{f}|dx+C\right)\leq\,C.
\end{align*}
So that
\begin{equation*}
\int_{\Omega}\left|\nabla \widetilde{w}_{1}\right|^{2}dx\leq\,C.
\end{equation*}

\textbf{STEP 1.2. Local energy estimates.}
Multiplying the equation in (\ref{divergence w1}) by $\eta^2\widetilde{w}_{1}$, where $\eta$ is the same cut-off function defined in the step 1.2 of proof of Proposition \ref{prop1}, and integrating by parts, we deduce
\[\int_{\widehat{\Omega}_{s}(z^{\prime})}\displaystyle A_{ij}\partial_{j} \widetilde{w}_{1} \partial_{i}(\eta^2\widetilde{w}_{1})dx=\int_{\widehat{\Omega}_{s}(z^{\prime})}\displaystyle \tilde{f}\eta^2\widetilde{w}_{1}dx.\]
Then
\begin{equation*}
\begin{aligned}
\int_{\widehat{\Omega}_{s}(z^{\prime})}\displaystyle A_{ij}(\eta\partial_{j}\widetilde{w}_{1})(\eta \partial_{i}\widetilde{w}_{1})dx
&=\int_{\widehat{\Omega}_{s}(z^{\prime})}\bigg[-2\displaystyle A_{ij}(\eta\partial_{j}\widetilde{w}_{1})\widetilde{w}_{1}\partial_{i}\eta+\tilde{f}\eta^{2}\widetilde{w}_{1}\bigg]dx.
\end{aligned}
\end{equation*}
By \eqref{ene01a} and the Cauchy inequality,
\begin{align*}
\lambda\int_{\widehat{\Omega}_{s}(z^{\prime})}|\eta\nabla\widetilde{w}_{1}|^2dx&\leq
\frac{\lambda}{4}\int_{\widehat{\Omega}_{s}(z^{\prime})}|\eta\nabla\widetilde{w}_{1}|^2dx\\
&\quad+\frac{C}{(s-t)^{2}}\int_{\widehat{\Omega}_{s}(z^{\prime})}|\widetilde{w}_{1}|^{2}dx+C(s-t)^{2}\int_{\widehat{\Omega}_{s}(z^{\prime})}|\tilde{f}|^{2}dx.
\end{align*}
Thus,
\begin{equation*}
\int_{\widehat{\Omega}_{t}(z^{\prime})}|\nabla \widetilde{w}_{1}|^2dx\leq\frac{C}{(s-t)^2}\int_{\widehat{\Omega}_{s}(z^{\prime})}|\widetilde{w}_{1}|^2dx+C(s-t)^{2}\int_{\widehat{\Omega}_{s}(z^{\prime})}|\tilde{f}|^2dx.
\end{equation*}
Using the iteration argument, similarly as step 1.2 in the proof of Proposition \ref{prop1}, we have $\widetilde{w}_{1}$ also satisfies \eqref{energy_w_inomega_z1}, that is,
\begin{equation*}
\int_{\widehat{\Omega}_{\delta}(z')}\left|\nabla{\widetilde{w}_{1}}\right|^{2}dx\leq
C\delta^{n},
\end{equation*}
where $\delta=\delta(z')$.
Thus, similarly as step 1.3 in the proof of Proposition \ref{prop1},  \eqref{nabla_w_i0''} is established.

\noindent\textbf{STEP 2.} Proof of \eqref{V1+V2_bounded1} and \eqref{nabla_V3} are the same as step 2 in the proof of Proposition \ref{prop1}. Proposition \ref{prop1'} is established.
\end{proof}

\begin{proof}[\bf Proof of Theorem \ref{thm1.6}]

Define
$$a_{kl}:=\int_{\partial D_{k}}A_{ij}(x)\,\partial_{j}{V}_{l}\,\nu_{i},\quad k,l=1,2.$$
By integrating by parts,
\begin{align*}
0=\int_{\Omega}\partial_{i}(A_{ij}(x)\partial_{j}V_{1})\cdot\,V_{1}
=&-\int_{\Omega}A_{ij}(x)\partial_{j}V_{1}\partial_{i}V_{1}-\int_{\partial{D}_{1}}A_{ij}(x)\,\partial_{j}{V}_{1}\,\nu_{i}\cdot\,1\\
=&-\int_{\Omega}A_{ij}(x)\partial_{i}V_{1}\partial_{j}V_{1}-a_{11}.
\end{align*}
That is,
$$-a_{11}=\int_{\Omega}A_{ij}(x)\partial_{i}V_{1}\partial_{j}V_{1}.$$
By the uniform ellipticity condition \eqref{ene01a},
$$\lambda\int_{\Omega}|\nabla\,V_{1}|^{2}\leq\,-a_{11}\leq\Lambda\int_{\Omega}|\nabla\,V_{1}|^{2}.$$
Thus Lemma \ref{lemma_a11} holds still. Then, combining with Proposition \ref{prop1'}, the proof of Theorem \ref{thm1.6} is completed.
\end{proof}

\noindent{\bf{\large Acknowledgements.}} Part of this work was completed while the third author was visiting Professor Hongjie Dong at Brown University. She also would like to thank the Division of Applied Mathematics at Brown University for the hospitality and the stimulating environment. The authors would like to express their gratitude to Professor Hongjie Dong, Theorem \ref{thm1.6} is added thanks to his comments.


\end{document}